\newcommand \C {\mathbf{C}}
\newcommand \N {\mathbb{N}}
\newcommand \R {\mathbf{R}}
\newcommand \sign{\mathrm{sign}}
\newcommand \Sign{\mathrm{Sign}}
\newcommand \re{_{\rm re}}
\newcommand \im{_{\rm im}}
\newcommand \ind{{\rm w}}
\newcommand \mind{{\rm W}}
\newcommand \Ind{{\rm Ind}}
\def\sign{{\rm{sign}}}
\def\mult{{\rm{mult}}}
\def\val{{\rm{val}}}
\def\Var{{\rm{Var}}}
\newcommand{\hide}[1]{}
\newtheorem{defn}{Definition}
\newtheorem{notn}[defn]{Notation}
\newtheorem{lemma}[defn]{Lemma}
\newtheorem{proposition}[defn]{Proposition}
\newtheorem{theorem}[defn]{Theorem}
\newtheorem{notation}[defn]{Notation}
\newtheorem{example}[defn]{Example}
\newenvironment{proof}[1]{
  \trivlist \item[\hskip \labelsep{\it #1}]}{\hfill\mbox{$\square$}
  \endtrivlist}
\title{Algebraic winding numbers}
\author{Daniel Perrucci$^{\flat}$\thanks{{\scriptsize Partially supported by the grants} 
{\scriptsize UBACYT 20020190100116BA,}
{\scriptsize PIP 11220200101015CO CO\-NI\-CET}
{\scriptsize and} 
{\scriptsize PICT 2018-02315.}
}  \quad  Marie-Fran\c{c}oise Roy$^{\sharp}$ \\[5mm]
{\small ${\flat}$ Departamento de Matem\'atica, FCEN, Universidad de Buenos Aires
and IMAS UBA-CONICET,}\\
{\small Buenos Aires, Argentina,}\\ 
{\small ${\sharp}$ IRMAR (UMR CNRS 6625), Universit\'e de Rennes,} \\
{\small Campus de Beaulieu, 35042 Rennes Cedex,  France.}}
\begin{document}
\maketitle

\begin{abstract}
In this paper, we  propose a new algebraic winding number 
and prove that it computes 
the number of complex roots of a polynomial in a rectangle,
including roots on edges or vertices with appropriate counting.
The definition makes sense for the algebraic closure  $\C=\R[i]$ 
of a real closed field $\R$, and the root counting result also holds in this case.
We 
study in detail the properties of the algebraic winding number 
defined in \cite{Eis} with respect to complex root counting in rectangles. 
We extend both winding numbers to rational functions, 
obtaining then algebraic versions of the argument principle for 
rectangles. 
\end{abstract}

\bigskip
\textbf{Keywords:} Root counting, Cauchy index, Winding number, Argument principle.  

\textbf{MSC2020:} 12D10, 13J30, 14Q20.

\section{Introduction}

The classical argument principle applied to a rational function $F/G \in \mathbb{C}(Z) \setminus \{0\}$ on a 
rectangle $\Gamma$, 
states that, as long as $F/G$ has no zeros or poles on the boundary $\partial \Gamma$ of $\Gamma$, 
the winding number of the curve $(F/G) \circ \partial \Gamma$,
which can be computed analytically as
$$
\frac{1}{2\pi i} \int_{\partial \Gamma} \frac{(F/G)'(z)}{(F/G)(z)}dz,
$$
counts the number of 
zeros (with multiplicity) minus the number of poles (with order) of $F/G$
inside $\Gamma$ (see \cite[Chapter 4, Section 5.2]{Ahl}). 

In this paper we give a new algebraic definition of the winding number, proving an algebraic version of the argument principle on a rectangle $\Gamma$ in full generality: the algebraic winding number counts the number of  zeros (with multiplicity) minus the number of poles (with order) even when there are zeroes of poles on the boundary of $\Gamma$. As can be expected, zeroes (resp. poles) on the  edges of $\Gamma$ count for 1/2 (resp. -1/2), while zeroes or poles at the vertices count for 1/4 (resp. -1/4). Moreover the algebraic version of the argument principle is  valid for the 
algebraic closure $\C=\R[i]$ of any real closed field $\R$, even in situations where the integration is not available. 
A first algebraic definition of the winding number already appears in \cite{Eis} but is not adapted to count roots at the vertices.

The strategy of the proof is to check that the count of zeroes is correct for the basic cases of constants and degree one complex polynomials and to use additivity properties of the algebraic winding numbers with respect multiplication of rational functions to prove the general case.
These additivity properties are proved through a  univariate auxiliary product formula, since the algebraic winding numbers are defined by univariate Cauchy indices associated to the real and imaginary part of the rational function on the edges of $\Gamma$.
This strategy was already used in  \cite{Eis} but the additivity of the previous algebraic number was not universally true, and moreover the auxiliary product formula was also not valid in all cases.

Finally, studying in all needed details the delicate situations involved,  we obtain a complete algebraic proof of the argument principle, with no restriction on the edges and vertices. We also  generalize the results in  \cite{Eis}, clarify some of its statements and complete its proofs.

After this informal presentation, the introduction proceeds now with the definition of the Cauchy index, the definition of the two algebraic winding numbers, for polynomials and for rational functions, and finally states the main results of the paper and explains the organization of the following sections.

\subsection{Definition of the Cauchy index}

We define the Cauchy index of a pair of univariate polynomials $(P,Q)$ on an interval. This definition coincides with the classical Cauchy index of the rational function $P/Q$ when  both $P,Q$ do not vanish at the endpoints of the interval and is needed in full generality here.

Let $\N = \{0, 1, 2, \dots \}$.
Let $\R$ be a real closed field and $x \in \R$. We consider the sign of $x$ as usual as
$$
\sign(x) = 
\left \{
\begin{array}{ll}
1 & \hbox{if } x > 0, \\[3mm]
0 & \hbox{if } x = 0, \\[3mm]
-1 & \hbox{if } x < 0. 
\end{array} \right.
$$
Given $P, Q \in \R[X] \setminus \{0\}$ they can be written uniquely as
$$
P = (X - x)^{\mult_x(P)}\,{P}_x, \qquad
Q = (X - x)^{\mult_x(Q)}\,{Q}_x,
$$
with 
$\mult_x(P), \mult_x(Q) \in \N$, $P_x , Q_x \in \R[X]$ and
$P_x(x) \ne 0, Q_x(x) \ne 0$.
We consider the valuation defined by $x$ on $\R(X)$ by
$$
\val_x(P/Q) = \left \{
\begin{array}{ll}
\mult_x(P)-\mult_x(Q) & \hbox{if } P \ne 0, \\[3mm]
+\infty & \hbox{if } P = 0. 
\end{array} \right.
$$

Let $P, Q \in \R[X]$. 
We define the Sign of $(P, Q)$ at $x$, which is the sign of the rational function $P/Q$ 
at $x$ whenever this makes sense, and $0$ otherwise. The reason why we consider pairs of polynomials instead
of rational functions is because in the following sections, the case $Q=0$ will also be of use. 

\begin{defn}
For $P, Q \in \R[X]$ and $x \in \R$,
$$
\Sign(P, Q, x) : = 
\left \{
\begin{array}{ll}
\sign\left(P_x(x)Q_x(x)\right) & 
\hbox{if } P \ne 0, Q\ne 0 \hbox{ and } \val_x(P/Q) = 0, \\[3mm]
0 & \hbox{otherwise.} \\[3mm] 
\end{array} \right.
$$
\end{defn}

It can be easily checked that whenever $P(x)$ and $Q(x)$ are not simultaneously $0$,
$\Sign(P, Q, x) = \sign(P(x)Q(x))$. 

We also consider the sign variation defined as follows. 

\begin{defn}
For $P, Q \in \R[X]$ and $x \in \R$, 
$$
{\rm Var}_x(P,Q) := \frac12- \frac12\Sign(P,Q,x).
$$
Also, for $a, b \in \R$, 
$${\rm Var}_a^b(P, Q) :=
{\rm Var}_a(P, Q) -{\rm Var}_b(P, Q) = -\frac12 \Sign(P, Q, a) + 
\frac12 \Sign(P, Q, b). 
$$
\end{defn}

Note that ${\rm Var}_x(P,Q)$ coincides with the usual notion of sign variation whenever $P(x)$ and $Q(x)$ are not simultaneously $0$, since in this case: 
$$
{\rm Var}_x(P,Q) = 
\left \{
\begin{array}{ll}
0 & \hbox{if } P(x)Q(x) > 0, \\[3mm]
\frac12& \hbox{if } P(x)Q(x) = 0, \\[3mm]
1 & \hbox{if } P(x)Q(x) < 0. 
\end{array} \right.
$$

We define the Cauchy index of $(P, Q)$ first at a point and then on an interval, 
following \cite{Eis}.
As before, we need to consider pairs of polynomials instead of rational functions in order to not to exclude 
the case $Q = 0$.

\begin{defn}\label{defn:CI_at_a_point} 
For $P, Q \in \R[X]$, $x \in \R$ and $\varepsilon \in \{+, -\}$, 
$$
{\rm Ind}_x^{\varepsilon} (P, Q ) := \left\{
\begin{array}{ll}
\frac{1}2  \sign \left( 
P_x(x)
Q_x(x) \right)  & 
\hbox{if } P \ne 0, Q \ne 0, \varepsilon = + \hbox{ and } \val_x(P/Q)<0,\\[3mm]
\frac{1}2  (-1)^{\val_x(P/Q)}  
\sign\left( 
P_x(x)
Q_x(x) \right)  & 
\hbox{if } P \ne 0, Q \ne 0,  \varepsilon = - \hbox{ and } \val_x(P/Q)<0, \\[6mm]
0 & \hbox{otherwise;} 
 \end{array}
\right.
$$
and 
$$
{\rm Ind}_x(P,Q) :=  {\rm Ind}_x^{+}(P, Q) - {\rm Ind}_x^{-} (P, Q).
$$
\end{defn}

It is easy to see that, if $\val_x(P/Q) < 0$, 
${\rm Ind}_x^{+} (P, Q )$ is half the sign of $P/Q$ at a sufficiently small interval to the right of $x$ and 
${\rm Ind}_x^{-} (P, Q )$ is half the sign of $P/Q$ at a sufficiently small interval to the left of $x$.
We illustrate the definition of Cauchy index at a point considering the graph of the rational function $P/Q$
around $x$ in different cases.

\begin{center}
\begin{tikzpicture}
      \draw[-] (-6.5,0) -- (-4,0);
      \draw[-] (-3,0) -- (-0.5,0);
      \draw[-] (0.5,0) -- (3,0);
      \draw[-] (4,0) -- (6.5,0);
      \draw[-] (-5.2,-0.1) -- (-5.2,0.1) node[above] {$x$} ;
      \draw[-] (-1.7,-0.1) -- (-1.7,0.1) node[above] {$x$} ;
      \draw[-] (1.7,-0.1) -- (1.7,0.1) node[above] {$x$} ;
      \draw[-] (5.2,-0.1) -- (5.2,0.1) node[above] {$x$} ;
      \draw[line width=0.8pt, domain=-6.5:-5.3,smooth,variable=\x] plot ({\x+0.05},{1/(\x+5)+1.85});
      \draw[line width=0.8pt, domain=-5.05:-4,smooth,variable=\x,] plot ({\x-0.05},{-1/(\x+5.4)+1.35});
      \draw[line width=0.8pt, domain=-3:-1.8,smooth,variable=\x] plot ({\x},{1/(\x+1.5)+2});
      \draw[line width=0.8pt, domain=-1.5:-0.5,smooth,variable=\x] plot ({\x-0.05},{1/(\x+1.8)-2});
      \draw[line width=0.8pt, domain=0.5:1.5,smooth,variable=\x] plot ({\x+0.1},{-2/(\x-2)-2.5});
      \draw[line width=0.8pt, domain=1.65:3,smooth,variable=\x] plot ({\x+0.05},{-0.3/(\x-1.5)+0.75});
      \draw[line width=0.8pt, domain=4:5.25,smooth,variable=\x,] plot ({\x-0.05},{-0.2/(\x-5.4)+0.25});
      \draw[line width=0.8pt, domain=5.25:5.9,smooth,variable=\x] plot ({\x+0.1},{1/(\x-5)-2.4}); 
      \node at (-5.2,-2.1) {$\Ind_x(P,Q) = 0 $};
      \node at (-1.7,-2.1) {$\Ind_x(P,Q) = 1 $};
      \node at (1.7,-2.1) {$\Ind_x(P,Q) = -1 $};
      \node at (5.2,-2.1) {$\Ind_x(P,Q) = 0 $};
      
      \end{tikzpicture}
\end{center}

\begin{defn}\label{def:cauchyindex} For $P, Q \in \R[X]$
and $a, b \in \R$,
$$
{\rm Ind}_a^b(P,Q) := 
\left \{
\begin{array}{ll}
{\rm Ind}_a^+(P, Q)  + 
\displaystyle{\sum_{x \in (a,b) }  {\rm Ind}_x (P, Q)} 
 - {\rm Ind}_b^-(P, Q) & \hbox{if }  a < b, \\[3mm]
-{\rm Ind}_b^a(P,Q) & \hbox{if } b < a , \\[3mm]
0 & \hbox{if }  a = b.
\end{array} \right.
$$
\end{defn}

Note that the sum is well-defined since for any $P$ and $Q$ we have 
$\Ind_x(P, Q) \ne 0$ only for a finite number of $x \in \R$. 

In the following picture we consider again the graph of the function
$P/Q$, this time in $[a, b]$.

\begin{center}
\begin{tikzpicture}
      \draw[-] (-7,0) -- (-1,0);
      \draw[-] (1,0) -- (7,0);
      \draw[-] (-6.8,-0.1) -- (-6.8,0.1) node[above] {$a$};
      \draw[-] (-1.2,-0.1) -- (-1.2,0.1) node[above] {$b$};
      \draw[-] (1.2,-0.1) -- (1.2,0.1) node[above] {$a$};
      \draw[-] (6.8,-0.1) -- (6.8,0.1) node[above] {$b$} ;
      \draw[line width=0.8pt, domain=-7:-6.13,smooth,variable=\x] plot ({\x+0.05},{1/(\x+5.9)+3});
      \draw[line width=0.8pt, domain=-5.9:-4.1,smooth,variable=\x,] plot ({\x},{-1/((\x+6.05)*(\x+3.95))-1.3});
      \draw[line width=0.8pt, domain=-3.9:-2.2,smooth,variable=\x] plot ({\x},{(\x+3)/((\x+4.11)*(\x+1.9))});
      \draw[line width=0.8pt, domain=-1.9:-1,smooth,variable=\x] plot ({\x-0.1},{(\x+3)/((\x+4.1)*(\x+2.05))-1.2});
      \draw[line width=0.8pt, domain=1:2.46,smooth,variable=\x] plot ({\x},{-(\x-3)/((\x-4.1)*(\x-2.55))-1.5});
      \draw[line width=0.8pt, domain=2.5:5,smooth,variable=\x] plot ({\x},{-0.65*(\x-3.5)/((\x-5.2)*(\x-2.35))+0.3});
      \draw[line width=0.8pt, domain=5.2:6.69,smooth,variable=\x] plot ({\x},{-0.5*(\x-6)/((\x-5.05)*(\x-6.8))+0.3});
      \node at (-4,-2.1) {$\Ind_a^b(P,Q) = 1 + 0 +1 = 2 $};
      \node at (4,-2.1) {$\Ind_a^b(P,Q) = -1 -1 -\frac12 = -\frac52$};
      \end{tikzpicture}
\end{center}

If $P$ and $Q$ are not simultaneously $0$, then any common factor of $P$ and $Q$ can 
be simplified in both $P$ and $Q$ without changing the value of $\Sign(P, Q, x)$, $\Var_a^b(P, Q)$ or 
$\Ind_a^b(P, Q)$ for any $x, a, b$ in $\R$.

The algorithmic symbolic computation of the Cauchy index of $(P,Q)$
can be done
using Sturm sequences as in \cite[Section 3]{Eis} or subresultant 
polynomials as in \cite{PerRoy2}.
Classically, the Cauchy index of the rational function $P/Q$
 is defined on intervals under the assumption that $P$ and $Q$ do not vanish at the endpoints and can be computed 
by various symbolic methods (see \cite[Chapter 9]{BPR}).

\subsection{Two different algebraic winding numbers}

First, we recall the algebraic winding number $\ind$
which was introduced in \cite{Eis} with the aim of providing a real algebraic 
proof of the fundamental theorem of algebra. 

Let $\C = \R[i]$ be the algebraic closure of 
the real closed field
$\R$. 

\begin{notn} 
For $F \in \C[X, Y]$, we denote
$F_{\rm re}$ and  $F_{\rm im}$ the real and imaginary parts of $F$, i.e.
the unique
polynomials in $\R[X, Y]$ such that the identity 
$$
F(X, Y) =
F_{\rm re}(X,Y)+iF_{\rm im}(X,Y)
$$
in $\C[X, Y]$ holds. 
\end{notn}

\begin{defn} \label{def:winding_number} (\cite[Definition 4.2]{Eis}) Let $F \in \C[X, Y]$,
$x_0, x_1, y_0, y_1 \in \R$ with $x_0 < x_1$ and $y_0 < y_1$, 
and  
$\Gamma := [x_0, x_1] \times [y_0, y_1]  \subset \R^2$.
We consider the following \emph{winding number} {\rm w} of $F$ on $\partial \Gamma$:
$$
\begin{array}{rcl}
\ind(F \, | \, \partial \Gamma) &:= & \frac12 \Big(
\Ind_{x_0}^{x_1}(F_{\rm re}(T, y_0),F_{\rm im}(T, y_0))
+
\Ind_{y_0}^{y_1}
(F_{\rm re}(x_1,T),F_{\rm im}(x_1,T))
\\[3mm]
&&+
\Ind_{x_1}^{x_0}
(F_{\rm  re}(T, y_1),F_{\rm im}(T, y_1))
+
\Ind_{y_1}^{y_0}
(F_{\rm re}(x_0,T),F_{\rm im}(x_0,T))
\Big). \\[0.5cm]
\end{array}
$$
\end{defn}

\begin{center}
\begin{tikzpicture}
      \draw[-] (-4,0) -- (4,0);
      \draw[-] (0,-2.5) -- (0,2.5);
      \node at (3,2) {I};
      \node at (-3,2) {II};
      \node at (-3,-2) {III};
      \node at (3,-2) {IV}; 
       \draw[line width=0.8pt, domain=1:3] 
       plot ({ 0.7*((\x-2)^2 + 2*(\x-2) - 0.75)) },{ 0.7*((\x-2)^2 - 1.5*(\x-2) -1)});
       \draw[{Triangle[scale=1.8]}-, domain=-2.2:-2,smooth,variable=\x]         plot({  0.7*((-\x-2)^2 + 2*(-\x-2) - 0.75) },{ 0.7*((-\x-2)^2 - 1.5*(-\x-2) -1)}); 
       \draw[line width=0.8pt, domain=1:3] 
       plot ( {0.7*( -(\x-2)^2 - 2.5*(\x-2) + 0.75) },{ 0.7*(-(\x-2)^2 + 2*(\x-2) + 1.5)});
        \draw[{Triangle[scale=1.8]}-, domain=-2.6:-2.4,smooth,variable=\x] 
        plot ({ 0.7*( -(-\x-2)^2 - 2.5*(-\x-2) + 0.75) },{ 0.7*(-(-\x-2)^2 + 2*(-\x-2) + 1.5)});
        \draw[line width=0.8pt, domain=1:3] 
       plot ({ 0.7*( (\x-2)^2 - 2*(\x-2) - 1.75) },{ 0.7*((\x-2)^2 + 2.5*(\x-2) - 1)});
       \draw[{Triangle[scale=1.8]}-, domain=2:2.2,smooth,variable=\x] 
       plot ( {0.7*( (\x-2)^2 - 2*(\x-2) - 1.75) },{ 0.7*((\x-2)^2 + 2.5*(\x-2) - 1)});
       \draw[line width=0.8pt, domain=1:3] 
       plot ( {0.7*( -(\x-2)^2 + 1.5*(\x-2) + 0.75) },{ 0.7*(-(\x-2)^2 -2*(\x-2) + 0.5)});
        \draw[{Triangle[scale=1.8]}-, domain=1.7:2.1,smooth,variable=\x] 
        plot ( {0.7*( -(\x-2)^2 + 1.5*(\x-2) + 0.75) },{ 0.7*(-(\x-2)^2 -2*(\x-2) + 0.5)});
    \draw (1.29, 0) circle[radius=4pt];
    \draw (0.72, 0) circle[radius=4pt];
    \draw (-1.06, 0) circle[radius=4pt];
    \draw (-1.62, 0) circle[radius=4pt];
    \node at (0,-3.2) {$\ind(F \, | \, \partial \Gamma) = 2$}; 
\end{tikzpicture}
\end{center}

The idea behind this definition is,
if we go through the curve $F \circ \partial \Gamma$ following the counterclock sense, 
to count one half of a turn each time this curve crosses the $X$-axis from quadrant IV to I or from 
quadrant II to III, and minus one half of a turn each time it crosses the $X$-axis from quadrant I 
to IV or from 
quadrant III to II. 
Since these crossings coincide with jumps of the rational function ${F_{\rm re}}/{F_{\rm im}}$ from $-\infty$ to $+\infty$
and from $+\infty$ to $-\infty$ respectively, the Cauchy index is an appropriate algebraic tool to
count the number of turns counterclockwise, which is (when  $F$ does not vanish on $\partial \Gamma$)
the classical definition of the winding number.

We consider a new variable $Z$ together with the inclusion $\C[Z] \subset
\C[X, Y]$ through the identity $Z = X + iY$. 

\begin{example} Let $F = Z$ and $\Gamma = [0, 1] \times [0, 1]$,
then 
$$
\ind(F \, | \, \partial \Gamma) =  \frac12 \Big(
\Ind_{0}^{1}(T,0)
+
\Ind_{0}^{1}
(1,T) + 
\Ind_{1}^{0}
(T,1)
+
\Ind_{1}^{0}
(0,T)
\Big)=\frac{1}{4}.
$$

This 
basic example illustrates why we consider the Cauchy index of a pair of polynomials $(P, Q)$ rather than the Cauchy index of a rational function $P/Q$: 
otherwise the Cauchy index $\Ind_{0}^{1}(T,0)$ would not be well 
defined. 
\end{example}

However the algebraic winding number $\ind$ is not additive with respect multiplication by  a constant as we see now.

\begin{example} \label{ex:0}
Let $\alpha, \beta \in \R$ with 
$0 < \beta < \alpha$, $\gamma = \alpha + i \beta \in \C$ and $\Gamma = [0, 1] \times [0, 1] \subset \R^2$. Then 
$\ind(\gamma \, | \, \partial \Gamma) = 0$, 
$\ind(Z \, | \, \partial \Gamma) = \frac14$ but 
$\ind(\gamma Z \, | \, \partial \Gamma) = 0$ instead of $\frac14$.  
\begin{center}
\begin{tikzpicture}
     \draw[line width=0.8pt] (7, 0) -- (8.96,0.4) -- (8.56,2.36) -- (6.6,1.96) -- (7,0);
      \draw[-] (3.5, 0) -- (11.8,0);
      \draw[-] (7, -1.4) -- (7,3.2);
       \node at (6.5,-0.3) {$(0,0)$};
     \node at (9.5,0.4) {$(\alpha,\beta)$};
     \node at (6,2.2) {$(-\beta,\alpha)$};
     \node at (9.5, 2.7) {$(\alpha-\beta,\alpha+\beta)$};
      \node at (11.5,2.8) {{\rm I}};
      \node at (4.0,2.8) {{\rm II}};
      \node at (4.0,-1) {{\rm III}};
      \node at (11.5,-1) {{\rm IV}}; 
      \draw[{Triangle[scale=1.8]}-, domain=-0.2:1,smooth,variable=\x] 
      plot ( {(1-\x) + 7 },{ (0.4/1.96)*(1-\x)   });
       \draw[{Triangle[scale=1.8]}-, domain=0.6:1,smooth,variable=\x] 
      plot ( {(\x) + 6.8 },{ (0.4/1.96)*(\x) +2  });   
       \draw[{Triangle[scale=1.8]}-, domain=0.6:1,smooth,variable=\x] 
      plot ( { -(0.4/1.96)*(\x) +6.95  },{(\x)+0.2  });   
      \draw[{Triangle[scale=1.8]}-, domain=0.6:1,smooth,variable=\x] 
      plot ( { -(0.4/1.96)*(1-\x) +8.8  },{(1-\x)+1.1  });
      \fill (7, 0) circle[radius=2pt];
      \fill (8.96, 0.4) circle[radius=2pt];
      \fill (8.56, 2.36) circle[radius=2pt];
      \fill (6.6, 1.96) circle[radius=2pt];
      \end{tikzpicture}
      \end{center}
\end{example}

One of the main contributions of this paper is the definition of a new algebraic 
winding number $\mind$, 
more symmetric than $\ind$,
which 
works in general
for computing the number of roots with multiplicity on rectangles, counting one half for roots on edges and one quarter for roots
on vertices (Theorem \ref{thm:new_wn}).

\begin{defn}\label{def:bigW}
Let $F \in \C[X, Y]$,
$x_0, x_1, y_0, y_1 \in \R$ with $x_0 < x_1$ and $y_0 < y_1$, 
and  
$\Gamma := [x_0, x_1] \times [y_0, y_1]  \subset \R^2$.
We define the following \emph{winding number} {\rm W} of $F$ on $\partial \Gamma$:
$$
\mind(F \, | \, \partial \Gamma) \ := \ \frac12 \Big(
\ind(F \, | \, \partial \Gamma) + 
\ind(iF \, | \, \partial \Gamma)
\Big).$$
\end{defn}

In order to motivate 
the definition of $\mind$, let us go back to Example \ref{ex:0}.

\begin{example} \label{ex:0bis} 
Continuing  Example \ref{ex:0}, we know already that
$\ind(\gamma Z \, | \, \partial \Gamma) = 0$. 
Note that even though the curve $(\gamma Z) \circ \partial \Gamma$ does not cross the $X$-axis, 
it crosses the $Y$-axis. 
In other words, the curve $(i\gamma Z) \circ \partial \Gamma$ crosses the $X$-axis
and we have  $\ind(i\gamma Z \, | \, \partial \Gamma) = 1/2$ as we see in the following picture.

\begin{center}
\begin{tikzpicture}
     \draw[line width=0.8pt] (0, 0) -- (-0.4,1.96) -- (-2.36, 1.56) -- (-1.96,-0.4) -- (0,0);
      \draw[-] (-4.8, 0) -- (3.5,0);
      \draw[-] (0, -1.4) -- (0,3.2);
       \node at (0.5,-0.3) {$(0,0)$};
     \node at (-2.5,-0.8) {$(-\alpha,-\beta)$};
     \node at (-0.8,2.3) {$(-\beta,\alpha)$};
     \node at (-3.7, 1.9) {$(-\alpha-\beta,\alpha-\beta)$};
      \node at (2.5,2.8) {{\rm I}};
      \node at (-4.5,2.8) {{\rm II}};
      \node at (-4.5,-1) {{\rm III}};
      \node at (2.5,-1) {{\rm IV}}; 
       \draw[{Triangle[scale=1.8]}-, domain=1.8:3,smooth,variable=\x] 
       plot ( {(1-\x)  },{ (0.4/1.96)*(1-\x)   });
       \draw[{Triangle[scale=1.8]}-, domain=-1.4:-1,smooth,variable=\x] 
      plot ( {(\x) -0.2 },{ (0.4/1.96)*(\x) +2  });   
       \draw[{Triangle[scale=1.8]}-, domain=1:0.6,smooth,variable=\x] 
      plot ( { -(0.4/1.96)*(\x) -0.05  },{(\x)+0.2  });   
      \draw[{Triangle[scale=1.8]}-, domain=1:0.6,smooth,variable=\x] 
      plot ( { -(0.4/1.96)*(1-\x) -2.15  },{(1-\x)+ 0.5 });
      \fill (0, 0) circle[radius=2pt];
      \fill (-0.4, 1.96) circle[radius=2pt];
      \fill (-2.36, 1.56) circle[radius=2pt];
      \fill (-1.96, -0.4) circle[radius=2pt];
      \end{tikzpicture}
      \end{center}
Finally $\mind(\gamma Z \, | \, \partial \Gamma) =1/4$. It can be checked that $\mind(\gamma \, | \, \partial \Gamma) =0$
and $\mind(Z \, | \, \partial \Gamma) =1/4$, so
the additivy  of $\mind$ with respect to multiplication holds in this special case.
\end{example} 
We shall indeed see later that $\mind$ is always additive with respect to multiplication in $\C[Z]\setminus \{0\}$ (Proposition \ref{Wisalogfinal}).

\subsection{Extension of algebraic winding numbers to rational functions}

For $F \in \C[X, Y]$, we denote $\overline F \in \C[X, Y]$ the conjugate polynomial 
$$
\overline F(X, Y) =
F_{\rm re}(X,Y)- iF_{\rm im}(X,Y).
$$
Now consider $F, G \in \C[X, Y]$ with $G \ne 0$. Then $G_{\re}^2 + G_{\im}^2 \ne 0 \in \R[X, Y]$ and in $\C(X, Y)$ we have the identity 
$$
\frac{F}G \quad = \quad 
\frac{F\overline G}{G\overline G} \quad = \quad 
\underbrace{\frac{(F \overline G)_{\re} }{G_{\re}^2 + G_{\im}^2}}_{\in \R(X, Y)} 
\ + \ i \
\underbrace{\frac{ (F\overline G)_{\im}}{G_{\re}^2 + G_{\im}^2}}_{\in \R(X, Y)}, 
$$
which defines the real and imaginary part of $F/G$. 
Note that the quotient in $\R(X, Y)$ between the real and imaginary parts of $F/G \in \C(X, Y)$ is the same as the quotient in $\R(X, Y)$ 
of the real and imaginary part of $F \overline G \in \C[X, Y]$. 
This motivates the following extension of the definition of the algebraic winding numbers to the field of rational functions.

\begin{defn} \label{def:winding_number_rational} Let $F/ G \in \C(X, Y)$, 
$x_0, x_1, y_0, y_1 \in \R$ with $x_0 < x_1$ and $y_0 < y_1$, 
and  
$\Gamma := [x_0, x_1] \times [y_0, y_1]  \subset \R^2$.
We define the following \emph{winding numbers} of $F/G$ on $\partial \Gamma$:
$$
\begin{array}{rcl}
\ind(F/G \, | \, \partial \Gamma) &:= & 
\ind(F \overline G \, | \, \partial \Gamma), \\[0.5cm]
\mind(F/G \, | \, \partial \Gamma) &:= & 
\mind(F \overline G \, | \, \partial \Gamma).
\end{array}
$$
\end{defn}

It can be easily checked that $\ind$ and 
$\mind$ are well defined for $F/G \in \C(X, Y)$, in the sense
that they do not depend on the election of the numerator $F$ and the denominator $G$ in $\C[X, Y]$.

\subsection{Goals of the paper}

The main results of the paper are
two
algebraic versions of the argument principle on rectangles.

Our main goal is to prove Theorem \ref{thm:new_wn} which shows that the new algebraic winding number $\mind$ 
counts complex zeroes and poles 
with full 
generality. 

As before, let  $\R$ be a real closed field and $\C=\R[i]$ its algebraic closure.

\begin{theorem}\label{thm:new_wn} 
Let
$F/G \in \C(Z) \setminus \{0\}$  and
 $\Gamma \subset \R^2$ a rectangle. Then $\mind( F/G \, | \, \partial \Gamma)$ counts the number of zeros (with multiplicity) minus 
the number of poles (with order) of $F/G$ in $\Gamma$. Zeros and poles
on the edges count for one half. Zeros and poles on the vertices count for 
one quarter. 
\end{theorem}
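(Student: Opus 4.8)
The plan is to reduce the statement to the linear case $F/G = Z - z_0$ via the multiplicativity of $\mind$ on $\C(Z) \setminus \{0\}$ (Proposition \ref{Wisalogfinal}). Concretely, write $F/G = c \prod_j (Z - a_j)^{m_j} / \prod_k (Z - b_k)^{n_k}$ with $c \in \C^*$, the $a_j$ the distinct zeros with multiplicities $m_j > 0$, and the $b_k$ the distinct poles with orders $n_k > 0$. By multiplicativity of $\mind$ with respect to products in $\C(Z) \setminus \{0\}$, together with $\mind(c \, | \, \partial \Gamma) = 0$ for a constant $c$, we get
$$
\mind(F/G \, | \, \partial \Gamma) = \sum_j m_j\, \mind(Z - a_j \, | \, \partial \Gamma) - \sum_k n_k\, \mind(Z - b_k \, | \, \partial \Gamma).
$$
So everything comes down to showing that for a single linear polynomial $Z - z_0$ one has $\mind(Z - z_0 \, | \, \partial \Gamma) = 1, \tfrac12, \tfrac14, 0$ according to whether $z_0$ is in the interior, on an edge, on a vertex, or in the exterior of $\Gamma$; the weighted sum then counts exactly zeros minus poles with the stated fractional conventions on the boundary.

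The next step is therefore to establish the linear case for $\mind$. Here I would use the definition $\mind(F \, | \, \partial \Gamma) = \tfrac12(\ind(F \, | \, \partial \Gamma) + \ind(iF \, | \, \partial \Gamma))$ and the already-recalled formula for $\ind(Z - z_0 \, | \, \partial \Gamma)$, which gives $1, \tfrac12, \tfrac14, 0$ in the interior/edge/vertex/exterior cases. The subtlety is that the auxiliary polynomial $iF = i(Z - z_0)$ is no longer of the form $Z - z_0'$, so one cannot directly quote the same formula; instead, observe that $i(Z - z_0)$ has the same zero set $\{z_0\}$, and evaluate $\ind(i(Z-z_0) \, | \, \partial \Gamma)$ directly from Definition \ref{def:winding_number}. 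Writing $i(Z - z_0) = (iZ) + (-iz_0)$, its real part is $-Y - \mathrm{Re}(iz_0)$ and its imaginary part is $X - \mathrm{Im}(iz_0)$, so restricted to each edge it is again an affine function of the edge parameter, and the Cauchy index contributions along the four edges can be computed case by case exactly as in the proof of the formula for $\ind$. The upshot I expect is $\ind(i(Z - z_0) \, | \, \partial \Gamma) = \ind(Z - z_0 \, | \, \partial \Gamma)$ in every position of $z_0$ relative to $\Gamma$ — geometrically because multiplying by $i$ rotates the image curve but does not change how many times it winds around the origin, and the degenerate boundary cases are symmetric under the rotation — whence $\mind(Z - z_0 \, | \, \partial \Gamma) = \ind(Z - z_0 \, | \, \partial \Gamma)$ takes the desired values.

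Assembling the two steps finishes the proof: substitute the linear values into the multiplicative expansion, group the zero-terms and pole-terms, and read off that interior zeros/poles contribute $\pm m_j$ resp. $\pm n_k$, edge ones contribute $\pm\tfrac12 m_j$ resp. $\pm\tfrac12 n_k$, and vertex ones $\pm\tfrac14 m_j$ resp. $\pm\tfrac14 n_k$, which is exactly the claimed count. The main obstacle is the second step, and within it the careful bookkeeping of the degenerate cases where $z_0$ lies on an edge or a vertex of $\Gamma$: there the relevant Cauchy indices involve the $\Ind_x^{\pm}$ one-sided contributions at an endpoint where $(F\overline G)_{\re}$ and $(F\overline G)_{\im}$ may both behave degenerately, and one must check that the averaging over $F$ and $iF$ built into $\mind$ produces precisely the one-half and one-quarter weights. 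I would handle this by exploiting the symmetry $z_0 \mapsto$ (image under multiplication by $i$) together with the rotational invariance of $\Gamma$'s combinatorial structure, reducing the four edge-positions and four vertex-positions to one representative each.
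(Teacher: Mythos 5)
Your proposal is correct and follows essentially the same route as the paper: reduce to linear factors and constants via the multiplicativity of $\mind$ (Proposition \ref{Wisalogfinal}) and then verify the values $1,\tfrac12,\tfrac14,0$ for $Z-z_0$ by direct computation, which is exactly the content of Lemma \ref{prop:oldwn_linear_conj} (whose proof the paper omits as a straightforward computation). The only cosmetic difference is that the paper states the pole case $\mind(1/(Z-z_0)\,|\,\partial\Gamma)$ explicitly in that lemma, whereas you obtain it from additivity together with $\mind(1\,|\,\partial\Gamma)=0$, which is equally valid.
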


The key tool to prove Theorem \ref{thm:new_wn} is to prove that, in opposition to $\ind$, 
$\mind$ is 
additive in general with respect to multiplication in $\C(Z) \setminus \{0\}$
(Proposition \ref{Wisalogfinal}).

Note that  Theorem \ref{thm:new_wn} provides immediately a computer algebra subdivision  method to isolate the complex roots of a polynomial  in a rectangle $\Gamma$ by cutting $\Gamma$ into four rectangles, computing $\mind$ for each of them and iterating the construction.  This method relies only on computing the univariate Cauchy index which is a basic algorithm in real algebraic geometry
(see for example \cite{BPR}). However such an algorithm is not optimal in terms of computational complexity. The complexity of the isolation of complex roots remains a very active domain of research. See \cite{Pan} for a classical reference in the topic.

The second 
goal of this paper (Theorem \ref{thm:multiplicativityinevencase}) 
is to
clarify the root counting properties of $\ind$.
We show that $\ind$
also counts complex zeroes and poles 
under mild hypothesis. 
Before stating this result, we extend the valuation 
we considered before to $\C(Z)$.

Given $F, G \in \C[Z] \setminus \{0\}$ they can be written uniquely as
$$
F = (Z - z)^{\mult_z(F)}\,{F}_z, \qquad
G = (Z - z)^{\mult_z(F)}\,{G}_z,
$$
with 
$\mult_z(F), \mult_z(G) \in \N
$, $F_z , G_z \in \C[Z]$ and
$F_z(z) \ne 0, G_z(z) \ne 0$.
We consider the valuation defined by $z$ on $\C(Z)$ by
$$
\val_z(F/G) = \left \{
\begin{array}{ll}
\mult_z(F)-\mult_z(G) & \hbox{if } F \ne 0, \\[3mm]
+\infty & \hbox{if } F = 0. 
\end{array} \right.
$$

\begin{theorem}\label{thm:multiplicativityinevencase} 
Let
$F/G \in \C(Z) \setminus \{0\}$ and
 $\Gamma \subset \R^2$ a rectangle
 such that $F/G$  has
even valuation at the vertices of $\Gamma$. 
Then $\ind( F/G \, | \, \partial \Gamma)$ counts the number of zeros (with multiplicity) minus 
the number of poles (with order) of $F/G$ in $\Gamma$. Zeros and poles
on the edges count for one half. Zeros and poles on the vertices count for 
one quarter. 
\end{theorem}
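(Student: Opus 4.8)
The plan is to reduce this statement to Theorem \ref{thm:new_wn} by showing that, under the even-valuation hypothesis at the vertices, the two winding numbers $\ind$ and $\mind$ agree on $F/G$ over $\partial\Gamma$. Recall $\mind(F/G \mid \partial\Gamma) = \tfrac12\big(\ind(F/G \mid \partial\Gamma) + \ind(iF/G \mid \partial\Gamma)\big)$, so it suffices to prove that $\ind(iF/G \mid \partial\Gamma) = \ind(F/G \mid \partial\Gamma)$ whenever $F/G$ has even valuation at each of the four vertices. Writing everything in terms of $H := F\overline{G} \in \C[Z]$ (since $\overline{G}$ is the conjugate of a polynomial in $Z$, $H$ is again a polynomial in $Z$, and multiplying by $i$ just swaps real and imaginary parts up to sign: $(iH)_{\re} = -H_{\im}$, $(iH)_{\im} = H_{\re}$), the claim becomes a statement comparing $\ind(H \mid \partial\Gamma)$ with $\ind(iH \mid \partial\Gamma)$, and one notes that $\val_z(F/G) = \val_z(H)$ at any $z$ where the local factor of $\overline G$ is a unit — in particular this holds for $\val_z(F/G)$ being the order of zero/pole data we care about, and the even-valuation hypothesis transfers to $H$ at the vertices.

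The core computation lives at the level of a single edge and its two endpoints. First I would establish a local lemma: for $H \in \C[Z]$ and a point $z = a + ib$ lying on (the interior of, or as an endpoint of) one edge of $\partial\Gamma$, the contributions of the Cauchy indices $\Ind_{x}^{\varepsilon}$ at $z$ coming from the two edges meeting there (or the single edge, if $z$ is an edge-interior point) differ between $H$ and $iH$ by an amount controlled by the parity of $\val_z(H)$. Concretely, passing from $(H_{\re},H_{\im})$ to $(-H_{\im},H_{\re})$ multiplies the relevant sign $\sign(P_z(z)Q_z(z))$ by $\sign(-1) = -1$ and shifts the valuation $\val_z(P/Q)$ by the difference of the valuations of $H_{\re}$ and $H_{\im}$ along that edge — but because of the $(-1)^{\val_z(P/Q)}$ factor appearing in $\Ind_z^-$ in Definition \ref{defn:CI_at_a_point}, the net effect on $\Ind_z = \Ind_z^+ - \Ind_z^-$ and on the one-sided contributions telescoping into $\Ind_a^b$ cancels precisely when the local order at the vertex is even. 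On edge-interiors the two polynomials $H_{\re}(T,\cdot)$ and $H_{\im}(T,\cdot)$ cannot simultaneously vanish to high order in a way that breaks this (or if they do, the contribution is symmetric in $H$ versus $iH$ and cancels in the sum of the four oriented indices), so the only place a discrepancy can arise is at the vertices, where the hypothesis kills it.

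With this local lemma in hand, I would sum over the four oriented edges in Definition \ref{def:winding_number}: the interior contributions to $\ind(H\mid\partial\Gamma)$ and $\ind(iH\mid\partial\Gamma)$ match term by term, and the vertex contributions match because of even valuation, giving $\ind(iF/G\mid\partial\Gamma) = \ind(F/G\mid\partial\Gamma)$ and hence $\mind(F/G\mid\partial\Gamma) = \ind(F/G\mid\partial\Gamma)$. Then Theorem \ref{thm:new_wn} applied to $F/G$ gives exactly the desired root/pole count with the stated weights on edges and vertices. The main obstacle I anticipate is the bookkeeping at the vertices: at a vertex two edges meet at a right angle, the curve $H\circ\partial\Gamma$ makes a quarter turn, and one must carefully track how the one-sided Cauchy indices $\Ind^{+}$ and $\Ind^{-}$ from the incoming and outgoing edges combine — in particular verifying that when $\val_z(H)$ is odd the discrepancy between $H$ and $iH$ is genuinely present (so the hypothesis is not vacuous) and when it is even it cancels, which requires a clean case analysis on the signs of the leading coefficients of the real and imaginary parts of $H$ restricted to each of the two edges near the vertex.
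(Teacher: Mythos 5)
Your overall route is the same as the paper's: under the even-valuation hypothesis one shows $\ind(iF/G \mid \partial\Gamma)=\ind(F/G \mid \partial\Gamma)$, hence $\mind(F/G \mid \partial\Gamma)=\ind(F/G \mid \partial\Gamma)$, and then Theorem \ref{thm:new_wn} gives the count with the stated weights; this is exactly Remark \ref{rem:val_vertex_even} (which rests on Lemma \ref{multiplybyconstantforw} applied with $\gamma=i$) followed by the paper's one-line deduction. The gap is in how you propose to prove the key identity. Your ``local lemma'' asserts that the interior (non-vertex) contributions to the Cauchy indices of $(H_{\rm re},H_{\rm im})$ and $(-H_{\rm im},H_{\rm re})$ along an edge match term by term, with discrepancies arising only at vertices and controlled there by the parity of the valuation. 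That is false: at an interior point $t_0$ of an edge where the restricted imaginary part $Q$ has a simple zero and the restricted real part $P$ does not vanish (the image curve crosses the real axis), one has $\Ind_{t_0}(P,Q)=\pm 1$ while $\Ind_{t_0}(-Q,P)=0$, even though $F/G$ has valuation $0$ at that boundary point; so a pointwise bookkeeping of the $(-1)^{\val}$ factor cannot work. The correct mechanism is global on each edge: by the inversion formula (Theorem \ref{thm:invf}), $\Ind_a^b(-Q,P)=\Ind_a^b(P,Q)-\Var_a^b(P,Q)$, so the difference of the two winding numbers collapses to half-$\Sign$ terms evaluated at the four vertices only, and what then must be proved is that at each vertex the two $\Sign$ terms coming from the two incident edges cancel. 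That vertex cancellation is the real content (it is the paper's Lemma \ref{multiplybyconstantforw}, proved via the local expansions (\ref{eq:P1Q1epos})--(\ref{eq:R4S4fposodd})), and your proposal leaves it as ``the main obstacle I anticipate'' rather than carrying it out.

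A second, related error: $F\overline G$ is not a polynomial in $Z$; it lies in $\C[Z,\overline Z]\subset \C[X,Y]$, and at a vertex $z_0$ which is a pole of $F/G$ the local factor is $(\overline Z-\overline{z_0})^{-e}$ rather than $(Z-z_0)^{e}$, so the vertex analysis must treat zeros and poles (the cases $e\ge 0$ and $e<0$ in the paper) separately. Parity still rescues the argument because $\mult_{z_0}(F)-\mult_{z_0}(G)$ and $\mult_{z_0}(F)+\mult_{z_0}(G)$ have the same parity, but your proposal does not engage with the conjugate factor at all. In short: the reduction to Theorem \ref{thm:new_wn} is correct and matches the paper, but your proof of $\ind(iF/G \mid \partial\Gamma)=\ind(F/G \mid \partial\Gamma)$ as sketched would fail at interior axis crossings and omits the decisive vertex computation; both are repaired by the inversion formula plus the local expansions, i.e.\ by what the paper packages as Lemma \ref{multiplybyconstantforw}.
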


The key tool to prove Theorem \ref{thm:new_wn} is to prove that $\ind$ is additive with respect to multiplication in $\C(Z) \setminus \{0\}$ when restricted to rational functions with even valuation at 
the vertices of $\Gamma$ 
(Proposition \ref{lemwisalogeven}).

Note that in the particular case of a polynomial $F \in \C[Z] \setminus\{0\}$ which does not
vanish at the vertices of $\Gamma$, Theorem \ref{thm:multiplicativityinevencase}
gives
as a corollary the following result.

\begin{theorem}\label{thm:from_Eis}(\cite[Theorem 5.1]{Eis})
Let $F \in \C[Z]\setminus \{0\}$ and  $\Gamma \subset \R^2$ a rectangle
such that $F$ does not vanish at  the vertices of $\Gamma$. Then $\ind(F \, | \, \partial \Gamma)$ counts the number of zeros (with multiplicity) of $F$ in 
$\Gamma$. Zeros on the edges count for one half. 
\end{theorem}

The proof in \cite{Eis} of Theorem \ref{thm:from_Eis} (\cite[Theorem 5.1]{Eis}) has a flaw related to the fact that the auxiliary product formula 
(\cite[Theorem 4.5]{Eis})  
does not hold in full generality (we will discuss the topic of the auxiliary product formula  in Section \ref{sec:auxiliar_product_formula}).
For a polynomial $F$ which does not vanish on the  boundary of $\Gamma$, the proof of 
Theorem \ref{thm:from_Eis}
can be fixed easily
(see details in \cite[Lemma 20 and Theorem 39]{PerRoy}); but to the best of our knowledge,
Theorem \ref{thm:from_Eis}
for a polynomial $F$ with roots on the edges of $\Gamma$, was not proved yet.

\subsection{Organization of the paper}

The rest of the paper is organized as follows. In Section \ref{sec:auxiliar_product_formula} we 
discuss the auxiliary product formula on univariate polynomials coming from \cite[Theorem 4.5]{Eis}.
In Section \ref{sec:prod_wind} we
study 
product formulas for complex rational functions, more precisely the additivity properties
of the algebraic winding numbers $\ind$ and $\mind$ with respect to 
multiplication in 
$\C(Z) \setminus\{0\}$, using the auxiliary product formula.
Finally, in Section \ref{sec:proof_main_results}, 
we use the additivity properties of the algebraic winding numbers 
$\mind$ and $\ind$ to prove Theorem \ref{thm:new_wn} 
and Theorem \ref{thm:multiplicativityinevencase}.

\section{Auxiliary product formula for univariate polynomials}
\label{sec:auxiliar_product_formula}

Let $F, H \in \C[Z] \setminus \{0\}$ and let $\Gamma \subset \R^2$ be a rectangle. The strategy developed in \cite{Eis}
to 
relate $\ind(FH \, | \, \partial \Gamma)$ with  
$\ind(F \, | \, \partial \Gamma) +\ind(H \, | \, \partial \Gamma)$
is to consider first the polynomials $P, Q, R, S$ obtained when restricting
$F_{\re},$ $F_{\im},$ $H_{\re},$ $H_{\im}$ to 
an edge
of $\Gamma$,
then the polynomials 
$PR - QS, PS + QR$
which are the real and imaginary part of the product $FH$ restricted to this edge of $\Gamma$,
and then 
relating 
$\Ind_a^b(PR - QS, PS + QR)$,
with $\Ind_a^b(P, Q)$ $+$ $\Ind_a^b(R, S)$, 
where $a < b$ are the endpoints of the interval parametrizing the edge of $\Gamma$ under
consideration. 

For any $P, Q, R, S \in \R[X]$ and $a, b \in \R$ with $a < b$,
we consider the following 
auxiliary product formula  coming from \cite[Theorem 4.5]{Eis}: 
\begin{equation}\label{eqn_auxiliar}
\Ind_a^b(PR - QS, PS + QR)
=  \Ind_a^b(P, Q) + \Ind_a^b(R, S)-\Var_a^b(PS + QR, QS).
\end{equation}
The auxiliary product formula (\ref{eqn_auxiliar}) does not hold with full generality, as can be seen in the following example appearing already in \cite{PerRoy}. 

\begin{example}\label{ex:1} Let $a = 0$, $b = 1$, $P = 1$, $Q = X$, $R = X-1$, $S = X$. Then 
$PR - QS  =  -X^2 + X - 1,$ $PS + QR =  X^2$ and 
$$
\Ind_0^1(PR - QS, PS + QR)  =   -\frac12, \quad
\Ind_0^1(P, Q)  =   \frac12, \quad
\Ind_0^1(R, S)  =   -\frac12, \quad
\Var_0^1(PS + QR, QS)  = 0,
$$
so the auxiliary product formula (\ref{eqn_auxiliar}) does not hold. 
\end{example}
 
In the rest of the section, we characterize exactly the cases where the auxiliary product formula (\ref{eqn_auxiliar}) holds, and we provide an alternative formula for the cases in which  
(\ref{eqn_auxiliar}) does not hold. To this aim, we introduce first the following definition.

\begin{defn} Let $P, Q, R, S  \in \R[X]$ and 
$c \in \R$. We say that $c \in \R$ is a \emph{bad number} for $P, Q, R, S$,  if $Q, S \ne 0$ and 
 $c$ satisfies the following two 
conditions:
\begin{itemize}
\item $\val_c(P/Q) = \val_c(R/S) < 0$,
\item $\val_c((PS+ QR)/QS) = 0$.
\end{itemize}
If $P, Q, R, S$ are clear from the context, we simply say that $c$ is 
a bad number. 
\end{defn}

To illustrate this definition, $0$ is a bad number and 1 is not a bad number in Example \ref{ex:1}.

Note that if $c$ is a bad number, then $Q, S \ne 0$
by definition but also necessarily $P, R, PS + QR \ne 0$. 
Note also that since bad numbers are roots of $Q$ and $S$,  there is at most a finite number of bad numbers in $\R$.

Next result proves that the 
auxiliary product formula 
is valid when the endpoints of the interval are not bad numbers.

\begin{proposition}\label{lem:ab_not_bad} 
Let $P, Q, R, S \in \R[X]$ with 
$P$ and $Q$ not 
simultaneously $0$ and 
$R$ and $S$ not 
simultaneously $0$, and $a, b \in \R$ with $a < b$.
If $a$ and $b$ are not bad numbers, 
the 
auxiliary product formula (\ref{eqn_auxiliar})  
\begin{equation*}
\Ind_a^b(PR - QS, PS + QR)
=  \Ind_a^b(P, Q) + \Ind_a^b(R, S)-\Var_a^b(PS + QR, QS)
\quad \quad \quad \quad  
\end{equation*}
holds.
\end{proposition}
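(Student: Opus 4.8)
The plan is to reduce the statement over the interval $(a,b)$ to a local, pointwise comparison at each $x \in [a,b]$ and then reassemble. Recall from Definition~\ref{def:cauchyindex} that $\Ind_a^b$ is built from the half-index contributions $\Ind_x^{\varepsilon}$ at the endpoints and the full indices $\Ind_x$ at interior points, while $\Var_a^b$ is built from the values $\Sign(\cdot,\cdot,a)$ and $\Sign(\cdot,\cdot,b)$. So it suffices to establish, for every $x \in \R$ and every $\varepsilon \in \{+,-\}$, a local identity of the shape
$$
\Ind_x^{\varepsilon}(PR - QS, PS + QR) = \Ind_x^{\varepsilon}(P,Q) + \Ind_x^{\varepsilon}(R,S) - \tfrac12\,\delta_x^{\varepsilon},
$$
where $\delta_x^{\varepsilon}$ is a correction term that, when summed in the combination defining $\Ind_a^b$ minus $\Var_a^b$, produces exactly $-\Var_a^b(PS+QR,QS)$, \emph{provided $x$ is not a bad number}. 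Taking the $\varepsilon=+$ minus $\varepsilon=-$ combination at interior points and the appropriate one-sided versions at $a,b$ then yields the claimed formula after summation (the sum being finite, since all the relevant indices vanish off a finite set).

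First I would fix $x$ and set $m = \val_x(P/Q)$, $n = \val_x(R/S)$ (allowing $\pm\infty$), and analyze $\val_x((PR-QS)/(PS+QR))$ and $\val_x((PS+QR)/(QS))$ in terms of $m$ and $n$, together with the leading signs $\sign(P_x(x)Q_x(x))$, $\sign(R_x(x)S_x(x))$. The key algebraic input is that $(P+iQ)(R+iS) = (PR-QS) + i(PS+QR)$, so restricting attention to the rational function side, $(PR-QS)/(PS+QR)$ is — up to sign bookkeeping — governed by the product $(P/Q)\cdot(R/S)$ and the combination relevant to $\Var$ is governed by $(P/Q)+(R/S)$ via the identity $\frac{PS+QR}{QS} = \frac{P}{Q} + \frac{R}{S}$. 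I would split into cases according to the signs of $m$ and $n$: if $m \ge 0$ and $n \ge 0$ then all indices at $x$ vanish and there is nothing to prove; if exactly one of $m,n$ is negative, a short computation shows the index of the product equals the sum and the $\Var$-correction term vanishes at $x$; the delicate case is $m<0$ and $n<0$, where $\val_x((PS+QR)/(QS)) = \max(m,n) < 0$ \emph{unless} cancellation occurs in $PS+QR$. When $m \ne n$ there is no cancellation and one checks the formula directly by comparing parities $(-1)^{m+n}$ with $(-1)^m(-1)^n$ and the leading-coefficient signs. When $m = n$, either cancellation in $PS+QR$ raises the valuation strictly — but then $\val_x((PS+QR)/(QS)) = 0$ would be precisely the bad-number condition, which is excluded by hypothesis at $x=a,b$ and \emph{cannot occur at interior $x$ either} since at an interior point $\Ind_x$ only sees whether $\val_x(PR-QS)/(PS+QR) < 0$, and one must check the correction term matches — or the valuation stays at $m<0$, which is again a direct computation.

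The main obstacle I anticipate is the bookkeeping in the case $m = n < 0$ at an interior point: here $PS + QR$ may have valuation strictly greater than $m$, and one must verify that as long as this valuation is still negative (equivalently, as long as $x$ is not bad), the sign and parity factors conspire correctly. Concretely, writing $P/Q = u(X-x)^m + \cdots$ and $R/S = v(X-x)^m + \cdots$ near $x$ with $u,v$ the leading terms, the sum $P/Q + R/S$ has leading term $(u+v)(X-x)^m$ when $u+v\neq 0$, and the bad-number condition is exactly $u+v = 0$ together with the next surviving term being the constant one. So the non-bad hypothesis guarantees either $u+v\neq 0$ (valuation of the sum is $m$, formula checks out) or $u + v = 0$ with the sum having valuation strictly between $m$ and $0$ (still negative, so $\Var$ at $x$ contributes $0$ and one verifies the product-side index matches $\Ind_x(P,Q)+\Ind_x(R,S)$ by a parity argument). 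I would organize this final case as a short lemma on valuations of $\frac{P}{Q}+\frac{R}{S}$ versus $\frac{P}{Q}\cdot\frac{R}{S}$, then feed it into the endpoint/interior summation to conclude. The reassembly step — checking that the local $\delta_x^{\varepsilon}$ sum to $-\Var_a^b(PS+QR,QS)$ — is then routine, using that $\Var_a^b$ telescopes as $\Var_a - \Var_b$ and that $\Var_x(PS+QR,QS)$ is nonzero only when $\val_x((PS+QR)/(QS))=0$.
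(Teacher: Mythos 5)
Your overall strategy (localize at each point via the valuations of $P/Q$ and $R/S$, using $\tfrac{PS+QR}{QS}=\tfrac PQ+\tfrac RS$, then reassemble) is in the spirit of the paper's proof, but the case analysis you sketch dismisses as trivial exactly the cases that carry the content, and in one place asserts something false. First, the claim that when $\val_x(P/Q)\ge 0$ and $\val_x(R/S)\ge 0$ ``all indices at $x$ vanish and there is nothing to prove'' is wrong: $\Ind_x(PR-QS,PS+QR)$ can be nonzero there, namely when $PS+QR$ vanishes at $x$ while $PR-QS$ does not (take $P=2$, $Q=S=1$, $R=X-2$ at $x=2$: the pair $(PR-QS,PS+QR)=(2X-5,X-2)$ has index $-1$ at $x=2$ although $\Ind_x(P,Q)=\Ind_x(R,S)=0$). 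Likewise the mixed case (exactly one valuation negative) is not a case where ``the index of the product equals the sum'': with $P=1$, $Q=X$, $R=2X$, $S=1$ at $x=0$ one gets $\Ind_0(PR-QS,PS+QR)=0$ but $\Ind_0(P,Q)+\Ind_0(R,S)=1$. In both situations the discrepancy is absorbed by the term $-\Var_a^b(PS+QR,QS)$, which depends only on the two endpoints $a,b$; so your plan of summing purely local corrections $\delta_x^{\varepsilon}$ cannot work as stated --- you must explain how an interior discrepancy at $x$ is seen at $a$ and $b$, i.e.\ that $(PS+QR)\,QS$ changes sign across exactly those $x$ and by the right amount. This bookkeeping is precisely what the paper does by subdividing $[a,b]$ so that each such root becomes an endpoint of a subinterval (its Cases 2a--2d); your proposal labels the reassembly ``routine'' while having removed the cases where it is not. (Also, $\Var_x(PS+QR,QS)$ is \emph{not} nonzero only when $\val_x((PS+QR)/QS)=0$; it equals $\tfrac12-\tfrac12\Sign(PS+QR,QS,x)$ and is nonzero whenever that Sign is not $+1$.)

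Second, your treatment of bad numbers is off. The hypothesis only excludes bad numbers at $a$ and $b$; they can perfectly well occur in the interior (Example \ref{ex:1} with the interval enlarged to $[-1,1]$ has the bad number $0$ inside), so your parenthetical claim that they ``cannot occur at interior $x$'' is false. At such an interior point $c$ one has $\val_c((PR-QS)/(PS+QR))=-2\mu$ even, so $\Ind_c(PR-QS,PS+QR)=0$, and the formula survives only because $\Ind_c(P,Q)+\Ind_c(R,S)=0$, which needs the cancellation $P(c)S_c(c)+Q_c(c)R(c)=0$ forced by $\mult_c(PS+QR)>2\mu$ being impossible --- this is the paper's Case 3 and is genuinely missing from your argument. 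Finally, in your $m=n<0$ analysis, ``not bad'' means $\val_c((PS+QR)/QS)\neq 0$, which also allows this valuation to be positive, not only ``strictly between $m$ and $0$''; that subcase needs to be covered too. So the proposal as written has real gaps: the nontrivial endpoint cases (roots of $PS+QR$, $Q$ or $S$ with the other data regular), the interior bad-number case, and the mechanism tying interior corrections to the endpoint quantity $\Var_a^b(PS+QR,QS)$ all still need proofs.
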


The second result studies the situation when the endpoints of the interval are bad numbers. 
This result will not be needed in the rest of the paper, but we include it for the sake of completeness.

\begin{proposition}\label{lem:form_adapt_2}
Let $P, Q, R, S \in \R[X]$ with 
$P$ and $Q$ not 
simultaneously $0$ and 
$R$ and $S$ not 
simultaneously $0$, and $a, b \in \R$ with $a < b$.\begin{enumerate}
\item[i)] If $a$ is
a bad number and $b$ is not a bad number, 
\begin{equation*}\label{eqn_auxiliar_2}
\Ind_a^b(PR - QS, PS + QR) \ 
=  \ \Ind_a^b(P, Q) + \Ind_a^b(R, S)  - \frac12 \Sign(PS + QR, QS, b). 
\end{equation*}
\item[ii)] If $b$ is a bad number and $a$ is not a bad number,
\begin{equation*}\label{eqn_auxiliar_3}
\Ind_a^b(PR - QS, PS + QR) \ 
= \  \Ind_a^b(P, Q) + \Ind_a^b(R, S)  + \frac12 \Sign(PS + QR, QS, a). 
\end{equation*}
\item[iii)]  If $a$ and $b$ are both bad numbers,
\begin{equation*}\label{eqn_auxiliar_4}
\Ind_a^b(PR - QS, PS + QR) \ 
=  \Ind_a^b(P, Q) + \Ind_a^b(R, S). 
\end{equation*}
\end{enumerate}

\end{proposition}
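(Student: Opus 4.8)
The plan is to reduce Proposition~\ref{lem:form_adapt_2} to Proposition~\ref{lem:ab_not_bad} by a local analysis at the bad endpoint(s), using the additivity of the Cauchy index on subintervals (Definition~\ref{def:cauchyindex}) to peel off a small piece of the interval around each bad number. First I would handle case i), where $a$ is bad and $b$ is not. Since bad numbers are roots of $Q$ and $S$ and hence isolated, I can choose $a'$ with $a < a' < b$ such that the half-open interval $(a, a']$ contains no bad number other than possibly $a$ itself, contains no root of any of $P,Q,R,S,PR-QS,PS+QR$ in its interior $(a,a')$, and $a'$ is not a bad number. Then by Definition~\ref{def:cauchyindex} all four quantities in the formula split as a sum over $[a,a']$ and $[a',b]$. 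On $[a',b]$ both endpoints are non-bad, so Proposition~\ref{lem:ab_not_bad} applies directly. On $[a,a']$ I must compute the discrepancy
$$
\Ind_a^{a'}(PR - QS, PS + QR) - \Ind_a^{a'}(P,Q) - \Ind_a^{a'}(R,S) + \Var_a^{a'}(PS+QR,QS)
$$
explicitly.

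The heart of the argument is this local computation at a bad number $c$ (take $c = a$, $d = a'$ with no relevant roots in $(c,d)$). Write $v := \val_c(P/Q) = \val_c(R/S) < 0$; set $p := P_c(c)$, $q := Q_c(c)$, $r := R_c(c)$, $s := S_c(c)$, all nonzero. A direct expansion shows $\val_c(PR-QS)$ and $\val_c(PS+QR)$ are both $\ge 2v$, with $\val_c(PS+QR) = 2v$ (leading coefficient $2 q s \ne 0$ after simplifying the $(Z-c)^{2v}$ factor — here I use that $PS$ and $QR$ have the same valuation $2v$ and their $(Z-c)^{2v}$-coefficients are $ps$ and $qr$, summing to... wait, I need $PS$ has valuation $v + \val_c(S)$ and $QR$ has valuation $\val_c(Q) + v$; since $\val_c(P/Q)=v$ means $\val_c(P) = \val_c(Q)+v$ is negative-shifted, I instead normalize by factoring out the common $(Z-c)^{\val_c(P)}$ from $P$ and $(Z-c)^{\val_c(Q)}$ from $Q$, etc.), and that $\val_c(PS+QR) = 0$ is exactly the bad-number condition $\val_c((PS+QR)/QS)=0$ combined with $\val_c(QS) = \val_c(Q)+\val_c(S)$. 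The key sign bookkeeping: the $(Z-c)$-adic leading coefficient of $PR-QS$ is $pr - qs$, of $PS+QR$ is $ps+qr$, of $QS$ is $qs$. Computing each Cauchy index via $\frac12(\pm1)^{\cdot}\sign(\cdot)$ and each $\Var$ via $\frac12 - \frac12\Sign$, and comparing, I expect the net local discrepancy on $[c,d]$ to be precisely $\tfrac12\Sign(PS+QR,QS,d) - \tfrac12$... and then recombining with the boundary term $\Var_c^d$ the constant $\tfrac12$ cancels, leaving exactly the correction $-\tfrac12\Sign(PS+QR,QS,b)$ claimed in i), after noting $\Sign(PS+QR,QS,d) = \Sign(PS+QR,QS,b)$ since there are no sign changes of the relevant functions on $[d,b]$ — or more carefully, carrying the $\Var_{a'}^b$ piece from the non-bad interval all the way. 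I would organize this so the arithmetic is done once.

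Case ii) follows from case i) by the reflection $X \mapsto -X$: bad numbers and all the indices transform compatibly, $\Ind_a^b$ becomes $-\Ind_{-b}^{-a}$, and $\Sign$ is unchanged at the corresponding point, so the $-\tfrac12\Sign(\cdot,b)$ correction becomes $+\tfrac12\Sign(\cdot,a)$. Alternatively one can rerun the same local computation at the right endpoint; I would state whichever is shorter. Case iii) then follows by applying case i) on $[a,c']$ (where $c'$ is an auxiliary non-bad point) and case ii) on $[c',b]$, adding the two identities: the two correction terms are $-\tfrac12\Sign(PS+QR,QS,c')$ and $+\tfrac12\Sign(PS+QR,QS,c')$, which cancel, yielding the clean formula with no correction.

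The main obstacle is the local sign computation at a bad number: one has to carefully track the $(Z-c)$-adic valuations and leading coefficients of the four polynomials $PR-QS$, $PS+QR$, $QS$ together with the parity factor $(-1)^{\val}$ appearing in $\Ind^-$, and verify that all the pieces combine to the stated constant. The subtlety is that $\val_c(PR-QS)$ is \emph{not} determined by the bad-number conditions alone (it can exceed $2v$ if $pr = qs$), so the contribution of $\Ind_c^d(PR-QS, PS+QR)$ needs a small case split according to whether $pr - qs$ vanishes; I expect, though, that in the vanishing subcase the index simply drops to $0$ while the right-hand side also loses the same amount, so the identity is unaffected — confirming this robustness is the delicate point.
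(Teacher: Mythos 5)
Your skeleton is the paper's: choose $a'\in(a,b)$ with no root of $P,Q,R,S,PS+QR$ in $(a,a']$, apply Proposition~\ref{lem:ab_not_bad} on $[a',b]$, reduce i) to a local identity on $[a,a']$, get ii) by symmetry and iii) by inserting a non-bad intermediate point so that the two correction terms cancel; all of that is sound. The genuine gap is in the local computation at the bad endpoint, which is the whole content of the proposition, and your sign bookkeeping there is wrong. Write $v=\val_a(P/Q)=\val_a(R/S)<0$ and $p=P_a(a)$, $q=Q_a(a)$, $r=R_a(a)$, $s=S_a(a)$. Since $\val_a(PR)=\val_a(QS)+2v<\val_a(QS)$, the $(X-a)$-adic leading coefficient of $PR-QS$ is $pr$, not $pr-qs$; in particular the subcase ``$pr=qs$'' you worry about never arises, and $\Ind_a^{a'}(PR-QS,PS+QR)$ is always $\pm\tfrac12$, never $0$. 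More seriously, the bad-number condition $\val_a((PS+QR)/QS)=0$ says precisely that $ps+qr=0$, so $ps+qr$ is \emph{not} the leading coefficient of $PS+QR$ (nor is it $2qs$): the relevant quantity is the value at $a$ of $T:=(PS+QR)/(X-a)^{\mult_a(QS)}$, whose sign $\sigma_3$ is a new datum not determined by $p,q,r,s$.

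With the correct leading coefficients, the local identity to be proved on $[a,a']$ is, setting $\sigma_1=\sign(p)$, $\sigma_2=\sign(r)$, $\sigma_4=\sign(q)$, $\sigma_5=\sign(s)$,
$$
\sigma_1\sigma_2\sigma_3=\sigma_1\sigma_4+\sigma_2\sigma_5-\sigma_3\sigma_4\sigma_5,
$$
and this is not a formal consequence of $ps+qr=0$: one needs that $\sigma_1,\dots,\sigma_5$ are simultaneously the signs of $P,R,PS+QR,Q,S$ at the nearby point $a'$, where the evaluation $(PS+QR)(a')=P(a')S(a')+Q(a')R(a')$ pins down $\sigma_3$ in the two decisive cases; this is exactly the Case~2d analysis in the proof of Proposition~\ref{lem:ab_not_bad}, which the paper reuses verbatim at this point. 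Your proposal never reaches this identity --- the expected cancellation of the constant $\tfrac12$ is asserted on the basis of the erroneous leading coefficients, and your closing paragraph concedes the delicate point is unverified --- so the decisive step is missing, even though the surrounding reduction (subdivision, reflection for ii), cancellation of the two $\Sign$ corrections for iii)) is correct and matches the paper.
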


It can be easily checked that in Example \ref{ex:1}, the equation  of item i)
holds instead of equation (\ref{eqn_auxiliar}).

In order to move faster to the study of the 
properties of the algebraic winding numbers, we postpone the proofs of 
Propositions \ref{lem:ab_not_bad} and \ref{lem:form_adapt_2}, 
which are rather lengthy and technical,
to an annex at the end of the paper.

\section{Product formulas for complex rational functions}
\label{sec:prod_wind}

The strategy to study the additivity of $\ind$ and $\mind$ with 
respect to product in $\C(Z) \setminus \{0\}$ is based on 
the auxiliary product formula (\ref{eqn_auxiliar}) proved in 
Proposition 
\ref{lem:ab_not_bad}. 

We consider 
$F/G, H/I\in \C(Z) \setminus \{0\}$
and a rectangle $\Gamma = [x_0, x_1] \times [y_0, y_1] \subset \R^2$.
Without loss of generality, throughout this section we assume that $F$ and $G$ 
are coprime and $H$ and $I$ are coprime. 

We first discuss the relation between $\ind(F / G  \cdot H / I \, | \,  \partial \Gamma)$ with   
$\ind(F / G \, | \,  \partial \Gamma) + \ind(H / I \, | \,  \partial \Gamma)$.

\begin{notation}\label{notn:pols_on_sides}
We take the parametrizations $(T, y_0)$, $(x_1, T)$, $(T, y_1)$ and $(x_0, T)$ of the lines containing the bottom, right, top and left edges of 
$\Gamma$ and consider the following polynomials in $\R[T]$:
$$
\begin{array}{cccc}
P_1(T) =  (F \overline G)_{\rm re}(T, y_0), &
Q_1(T) =  (F \overline G)_{\rm im}(T, y_0), &
R_1(T) =  (H \overline I)_{\rm re}(T, y_0), &
S_1(T) =  (H \overline I)_{\rm im}(T, y_0), \\[2mm]
P_2(T) =  (F \overline G)_{\rm re}(x_1, T), &
Q_2(T) =  (F \overline G)_{\rm im}(x_1, T), &
R_2(T) =  (H \overline I)_{\rm re}(x_1, T), &
S_2(T) =  (H \overline I)_{\rm im}(x_1, T), \\[2mm]
P_3(T) =  (F \overline G)_{\rm re}(T, y_1), &
Q_3(T) =  (F \overline G)_{\rm im}(T, y_1), &
R_3(T) =  (H \overline I)_{\rm re}(T, y_1), &
S_3(T) =  (H \overline I)_{\rm im}(T, y_1), \\[2mm]
P_4(T) =  (F \overline G)_{\rm re}(x_0, T), &
Q_4(T) =  (F \overline G)_{\rm im}(x_0, T), &
R_4(T) =  (H \overline I)_{\rm re}(x_0, T), &
S_4(T) =  (H \overline I)_{\rm im}(x_0, T). 
\end{array}
$$
\end{notation}

Using
Proposition \ref{lem:ab_not_bad} (four times),
if $x_0,x_1$
 are not bad numbers for $P_1, Q_1, R_1, S_1$ or $P_3, Q_3, R_3, S_3$ and 
 $y_0,y_1$ are not bad numbers
 for $P_2, Q_2, R_2, S_2$ or $P_4, Q_4, R_4, S_4$, 
 $$
\begin{array}{ccc}
& 2 \, \Big( \ind(F / G \cdot H / I\, | \, \partial \Gamma) -  
 \ind(F / G \, | \, \partial \Gamma) \ - \  \ind(H / I \, | \, \partial \Gamma)  \Big)& 
\\[3mm]
= & 2 \, \Big( \ind(F \overline G H \overline I\, | \, \partial \Gamma) -  
 \ind(F \overline G \, | \, \partial \Gamma) \ - \  \ind(H \overline I \, | \, \partial \Gamma) \Big) & 
\\[3mm]
 = &
- \  \Var_{x_0}^{x_1}(P_1S_1 + Q_1R_1, Q_1S_1)
\ - \ \Var_{y_0}^{y_1}(P_2S_2 + Q_2R_2, Q_2S_2) \\[3mm]
& - \  \Var_{x_1}^{x_0}(P_3S_3 + Q_3R_3, Q_3S_3)
\ - \ \Var_{y_1}^{y_0}(P_4S_4 + Q_4R_4, Q_4S_4).
\end{array}
$$
Therefore,  
$\ind( F  / G \cdot H  / I \, | \,  \partial \Gamma)$ coincides with
$\ind(F  / G  \, | \,  \partial \Gamma) + \ind(H  / I  \, | \,  \partial \Gamma)$
if and only if the four $\Var$ add up to $0$.
\begin{center}
\begin{tikzpicture}
      \draw[-] (-2.4, 3) -- (-2.4,0) -- (6.4,0) -- (6.4,3) -- (-2.4,3) ;
       \node at (-1.6,0.4) {$(x_0, y_0)$};
       \node at (5.6,0.4) {$(x_1, y_0)$};
       \node at (-1.6,2.55) {$(x_0, y_1)$};
       \node at (5.6,2.55) {$(x_1, y_1)$};
        \node at (2,1.6) {$\Gamma$};
      \node at (-0.8,-0.6) {$+ \frac12 {\Sign}(\dots, \dots, x_0)$};
      \node at (-0.8, 3.5) {$- \frac12 {\Sign}(\dots, \dots, x_0)$};
       \node at (4.8,-0.6) {$- \frac12 {\Sign}(\dots, \dots, x_1)$};
      \node at (4.8,3.5) {$+ \frac12 {\Sign}(\dots, \dots, x_1)$};
 \node at (-4.15 ,0.4) {$- \frac12 {\Sign}(\dots, \dots, y_0)$};
 \node at (-4.15 ,2.55) {$+ \frac12 {\Sign}(\dots, \dots, y_1)$};
 \node at (8.2 ,0.4) {$+ \frac12 {\Sign}(\dots, \dots, y_0)$};
 \node at (8.2 ,2.55) {$- \frac12 {\Sign}(\dots, \dots, y_1)$};
 \end{tikzpicture}
\end{center}

For instance, zooming around vertex $(x_0, y_0)$ we have: 
\begin{center}
\begin{tikzpicture}
      \draw[-] (-2, 1.5) -- (-2,0) -- (4,0);
      \draw[dashed] (4,0) -- (6,0);
      \draw[dashed] (-2,1.5) -- (-2,2.2);
       \node at (-1.2,0.4) {$(x_0, y_0)$};
       \node at (2.4,1.8) {$\Gamma$};
      \node at (0.6,-0.6) {$+ \frac12 {\Sign}(P_1S_1 + Q_1R_1, Q_1S_1, x_0)$};
\node at (-4.9,0.4) {$- \frac12 {\Sign}(P_4S_4 + Q_4R_4, Q_4S_4, y_0)$};
\end{tikzpicture}
\end{center}

It would be then enough to prove that at each vertex, the  
adding $\Sign$ cancels with the substracting $\Sign$. 
Unfortunately, this is not the case in general, as shown by the following example.

\begin{example} (Continuation of Example \ref{ex:0})
Let $F = Z, G=1$, $H = \alpha +i \beta$ with $0 < \beta < \alpha$, $I=1$ and $\Gamma = [0, 1] \times [0, 1]$. 
Then $S_1(T) = S_2(T) = S_3(T) = S_4(T) = \beta \ne 0$, so there are no bad numbers 
for $P_1, Q_1, R_1, S_1; P_2, Q_2, R_2, S_2; 
P_3, Q_3, R_3, S_3$ or $P_4, Q_4, R_4, S_4$. In addition,
$$
\begin{array}{llll}
P_1(T) =  T, &
Q_1(T) =  0, &
R_1(T) =  \alpha, &
S_1(T) =  \beta, \\[2mm]
P_4(T) =  0, &
Q_4(T) =  T, &
R_4(T) =  \alpha, &
S_4(T) =  \beta 
\end{array}
$$
and at vertex $(x_0, y_0)$ we have  
$$
{\Sign}(P_1S_1 + Q_1R_1, Q_1S_1, x_0) = 
\Sign(\beta T, 0, 0) = 0
$$
but 
$$
{\Sign}(P_4S_4 + Q_4R_4, Q_4S_4, y_0) = 
\Sign(\alpha T, \beta T, 0) = 1.
$$
It can be checked that the adding $\Sign$ cancels with the substracting $\Sign$
at the remaining three vertices $(x_1, y_0), (x_1, y_1), (x_0, y_1)$. 
Actually, 
we have already observed in Example \ref{ex:0} that $\ind$ is not additive in this case.
\end{example}

Another limitation to the method proposed above is the existence of bad numbers
for $P_i, Q_i, R_i, S_i$ for some $i = 1, \dots, 4$. 
The following example shows that the assumption that  
the rational functions (or even polynomials) do not vanish
at the vertices of the rectangle (as made in Theorem \ref{thm:from_Eis}) is not enough to avoid them. 

\begin{example} (Continuation of Example \ref{ex:1})
Let $F=iZ+1$, $G =1$, $H=(1+i)Z-1$, $I = 1$ and $\Gamma = [0, 1] \times [0,2]$. 
Then $F/G = F$ and $H/I = H$ do not vanish at any of the vertices of $\Gamma$; yet
the polynomials $P_1, Q_1, R_1, S_1$ are exactly the polynomials $P, Q, R, S$ in 
Example \ref{ex:1}, and we have seen already that $0$ is a bad number and therefore 
the auxiliary product formula (\ref{eqn_auxiliar}) does not hold. 
\end{example}

Nevertheless, next proposition shows that $\ind$ is indeed additive with 
respect to product in $\C(Z) \setminus \{0\}$ under 
mild 
hypothesis.

\begin{proposition}\label{lemwisalogeven}
Let $F/G, H/I \in \C(Z) \setminus \{0\}$ and 
$\Gamma \subset \R^2$ a rectangle
such that $F/G$ and $H/I$
have even valuation at the vertices of $\Gamma$.
Then $$\ind(F /G \cdot  H / I \, | \, \partial \Gamma) =
\ind(F / G\, | \, \partial \Gamma) + 
\ind(H  /  I \, | \, \partial \Gamma).$$
\end{proposition}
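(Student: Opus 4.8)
The plan is to reduce the additivity of $\ind$ over a product to the four auxiliary product formulas from Section~\ref{sec:auxiliar_product_formula}, one for each edge of $\Gamma$, and then show that the correction terms cancel in pairs at the vertices. Concretely, using Notation~\ref{notn:pols_on_sides}, I would apply Proposition~\ref{lem:ab_not_bad} or Proposition~\ref{lem:form_adapt_2} to each of the four intervals parametrizing the edges of $\Gamma$, writing
$$
2\,\ind(F\overline G H\overline I\mid\partial\Gamma) - 2\,\ind(F\overline G\mid\partial\Gamma) - 2\,\ind(H\overline I\mid\partial\Gamma)
$$
as a sum of four $\Var$ terms plus, possibly, some $\frac12\Sign$ correction terms coming from the bad numbers among $x_0,x_1,y_0,y_1$. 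The first task is to rewrite each $\Var_a^b$ in terms of its endpoint $\Sign$ values via the definition $\Var_a^b(\cdot,\cdot) = -\frac12\Sign(\cdot,\cdot,a) + \frac12\Sign(\cdot,\cdot,b)$, so that the whole right-hand side becomes a sum, over the eight (vertex, edge) incidences, of $\pm\frac12\Sign((F\overline G H\overline I)_{\re}, \dots)$-type quantities — exactly the picture drawn after Notation~\ref{notn:pols_on_sides}.

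The heart of the argument is then local at each vertex. Fix a vertex, say $(x_0,y_0)$; it is incident to the bottom edge (contributing $+\frac12\Sign(P_1S_1+Q_1R_1, Q_1S_1, x_0)$, with or without an extra correction depending on whether $x_0$ is a bad number for $P_1,Q_1,R_1,S_1$) and to the left edge (contributing $-\frac12\Sign(P_4S_4+Q_4R_4, Q_4S_4, y_0)$, similarly). I would evaluate all the polynomials $P_i,Q_i,R_i,S_i$ and $P_iS_i+Q_iR_i$, $Q_iS_i$ at the vertex, exploiting that they are the restrictions to the two edges of the same bivariate polynomials $(F\overline G)_{\re}, (F\overline G)_{\im}, (H\overline I)_{\re}, (H\overline I)_{\im}$, so that at the vertex $(x_0,y_0)$ the relevant values agree: $P_1(x_0) = P_4(y_0) = (F\overline G)_{\re}(x_0,y_0)$, etc. The even-valuation hypothesis enters precisely here: saying $F/G$ has even valuation at $(x_0,y_0)$ translates (via the formula $\frac FG = \frac{F\overline G}{G\overline G}$ and the fact that $G\overline G = G_{\re}^2+G_{\im}^2$ is a square times a real quantity of even valuation) into a statement that $F\overline G$, hence both $(F\overline G)_{\re}$ and $(F\overline G)_{\im}$ seen along the edges, vanish at the vertex to a common even order or the numerator does not vanish there at all; either way one should be able to show that $x_0$ is not a bad number for $P_1,Q_1,R_1,S_1$ and $y_0$ is not a bad number for $P_4,Q_4,R_4,S_4$, killing the extra correction terms. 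What remains is the genuinely even-valuation content: that $\frac12\Sign(P_1S_1+Q_1R_1,Q_1S_1,x_0) = \frac12\Sign(P_4S_4+Q_4R_4,Q_4S_4,y_0)$, i.e. the two $\Sign$ contributions at the vertex are equal and cancel with their opposite signs. This is a finite sign-chasing argument on the vertex values of the four real bivariate polynomials, analogous to the cases inside the proof of Proposition~\ref{lem:ab_not_bad}, and I expect it to be where all the work is; the even-parity assumption is exactly what makes the signs match rather than merely the absolute values.

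Having established the cancellation at all four vertices, the sum of the eight vertex contributions is zero, hence the left-hand side is zero, which gives $\ind(F\overline G H\overline I\mid\partial\Gamma) = \ind(F\overline G\mid\partial\Gamma) + \ind(H\overline I\mid\partial\Gamma)$, i.e. $\ind(F/G\cdot H/I\mid\partial\Gamma) = \ind(F/G\mid\partial\Gamma) + \ind(H/I\mid\partial\Gamma)$ by Definition~\ref{def:winding_number_rational}. I would be careful about two bookkeeping points: first, that dividing $F$ by $\gcd(F,G)$ and $H$ by $\gcd(H,I)$ (already assumed at the start of the section) does not change any $\ind$, and that $F\overline G H \overline I$ might have common factors in its real/imaginary restrictions to a given edge that must be cleared before applying the auxiliary product formula — but these clearings do not affect the $\Ind$ values; second, that the definition of "even valuation at a vertex" must be read in $\C(Z)$ as introduced just before Theorem~\ref{thm:multiplicativityinevencase}, and I should check the passage from $\val_z$ on $\C(Z)$ to the vanishing orders of the restricted real polynomials $P_i,Q_i$. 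The main obstacle is the vertex sign computation together with verifying that the even-valuation hypothesis genuinely rules out bad numbers at the vertices; once that is in place the rest is assembly.
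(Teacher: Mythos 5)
Your overall architecture (four applications of the auxiliary product formula, rewrite the $\Var$ terms as endpoint $\Sign$s, cancel in pairs at the vertices) is the paper's architecture, but the two claims you lean on to make it work are respectively false and unsubstantiated, and the paper's actual proof is built around a reduction you do not have. The false claim is that the even-valuation hypothesis lets you show the vertices are not bad numbers. Take $\Gamma=[0,1]\times[0,1]$, $F=1+iZ^2$, $G=1$, $H=1+(1-i)Z^2$, $I=1$: both $F/G$ and $H/I$ have valuation $0$ (even) at every vertex, yet on the bottom edge $P_1=1$, $Q_1=T^2$, $R_1=1+T^2$, $S_1=-T^2$, so $\val_0(P_1/Q_1)=\val_0(R_1/S_1)=-2<0$ and $P_1S_1+Q_1R_1=T^4$, $Q_1S_1=-T^4$ has valuation $0$ at $0$; hence $x_0=0$ \emph{is} a bad number even though the valuation hypothesis holds. (The point is that $\val_{z_0}(F/G)$ controls the common vanishing order of the pair $(P_1,Q_1)$, not the order of $Q_1$ alone, which can be larger whenever $F_{z_0}(z_0)\overline{G_{z_0}(z_0)}$ is real.) So your plan of ``killing the extra correction terms'' collapses, and you would be forced either into an unanalyzed bad-number cancellation via Proposition~\ref{lem:form_adapt_2}, or into the paper's route: first prove Lemma~\ref{multiplybyconstantforw} (invariance of $\ind$ under multiplication by a nonzero constant, valid precisely because of the even-valuation hypothesis --- Example~\ref{ex:0} shows it fails at odd valuation), then multiply $F/G$ and $H/I$ by generic constants so that no vertex is a bad number and $F_{z}(z)\overline{G_{z}(z)}$, $H_{z}(z)\overline{I_{z}(z)}$ are non-real at the vertices. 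That reduction is the key missing idea in your proposal.

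The second gap is the vertex identity $\Sign(P_1S_1+Q_1R_1,Q_1S_1,x_0)=\Sign(P_4S_4+Q_4R_4,Q_4S_4,y_0)$ itself. You propose to get it by evaluating the restricted polynomials at the vertex and using $P_1(x_0)=P_4(y_0)$, etc., but when the valuation at the vertex is a positive even number all these values are $0$ and $\Sign$ is governed by the coefficients left after clearing the $(T-x_0)$, resp.\ $(T-y_0)$, factors; plain evaluation gives no information. The paper's computation uses the expansions (\ref{eq:P1Q1epos}), (\ref{eq:P4Q4eposeven}), (\ref{eq:R1S1fpos}), (\ref{eq:R4S4fposeven}): for even $e$ the leading data of $(P_4,Q_4)$ is $(-1)^{e/2}(p,q)$, matching that of $(P_1,Q_1)$ up to a common sign, so both $\Sign$s equal $\sign\big((ps+qr)qs\big)$ (and $qs\ne0$ exactly because of the non-real normalization above); for odd $e$ the leading data gets twisted to $(-q,p)$, which is why the cancellation genuinely fails then. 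You correctly sense that parity is ``where all the work is,'' but the mechanism is this leading-coefficient twist along the two incident edges, not agreement of vertex values, and without it (and without the constant-multiplication reduction) the proof does not go through.
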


On the other hand, 
with respect to the new algebraic winding number $\mind$, 
next proposition shows that  it is
additive with 
respect to product in $\C(Z)\setminus \{0\}$ with full generality.

\begin{proposition}\label{Wisalogfinal}
Let $F/G, H/I \in \C(Z) \setminus \{0\}$ and 
$\Gamma \subset \R^2$ a rectangle.
Then $$\mind( F /G \cdot  H / I \, | \, \partial \Gamma) =
\mind( F /G \, | \, \partial \Gamma)
+
\mind(H / I \, | \, \partial \Gamma).$$
\end{proposition}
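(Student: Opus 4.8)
The plan is to reduce the full additivity of $\mind$ to the conditional additivity of $\ind$ from Proposition \ref{lemwisalogeven}, by exploiting the averaging structure in Definition \ref{def:bigW}. Recall that $\mind(F/G \,|\, \partial\Gamma) = \frac12\big(\ind(F/G \,|\, \partial\Gamma) + \ind(iF/G \,|\, \partial\Gamma)\big)$, where $\ind(iF/G\,|\,\partial\Gamma)$ means $\ind(iF\overline G\,|\,\partial\Gamma)$; the key point is that multiplication by $i$ swaps the real and imaginary parts (up to sign), and geometrically rotates the image curve by a quarter turn, which has the effect of interchanging the roles of the $X$-axis and the $Y$-axis. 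Since the obstruction to additivity of $\ind$ is exactly a crossing of the $Y$-axis at a vertex (as the examples show), averaging $\ind$ of $F\overline G$ with $\ind$ of $iF\overline G$ should symmetrize the situation so that the bad $\Sign$ terms cancel.

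First I would set up notation: write $F/G$ and $H/I$ with $F,G$ coprime and $H,I$ coprime as in the section preamble, and consider the four vertices of $\Gamma$. At each vertex $v = (u,w)$ the obstruction to additivity of $\ind$ is, by the computation preceding Proposition \ref{lemwisalogeven}, a difference of two $\Sign$ terms of the form $\frac12\Sign(P_jS_j+Q_jR_j, Q_jS_j, \cdot)$ coming from the horizontal edge through $v$ and from the vertical edge through $v$. When $F/G$ or $H/I$ has odd valuation at $v$, these need not cancel. The idea is: passing from $F\overline G$ to $iF\overline G$ replaces the pair $(P_j,Q_j)$ (real and imaginary parts restricted to the $j$-th edge) by $(-Q_j, P_j)$, and similarly leaves $(R_j,S_j)$ from $H\overline I$ unchanged (or one can also multiply $H\overline I$ by $i$ — one should check that $i(F\overline G)(H\overline I)$ versus $(iF\overline G)(H\overline I)$ gives the same $\ind$, which it does since $i(F\overline G \cdot H\overline I) = (iF\overline G)\cdot H\overline I$). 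So I would compute, for the product polynomial, the bad $\Sign$ terms for $iF\overline G \cdot H\overline I$ and observe that they are obtained from those for $F\overline G \cdot H\overline I$ by the substitution $(P_j,Q_j)\mapsto(-Q_j,P_j)$.

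Then the main step is an identity at the level of $\Sign$: I claim that for polynomials $P,Q,R,S\in\R[X]$ and $c\in\R$,
$$
\tfrac12\Sign(PS+QR,QS,c) + \tfrac12\Sign(-QS+PR, PS, c)
$$
equals a symmetric expression that is the same whether we start from the horizontal or the vertical edge at a vertex. More precisely, I expect that after averaging, the sum of all eight $\Sign$ contributions around a given vertex (two edges $\times$ two choices $F\overline G$ vs $iF\overline G$) telescopes to zero regardless of parities. This is plausible because multiplication by $i$ exchanges the "horizontal crossing" and "vertical crossing" bookkeeping, so the horizontal bad term for $F\overline G$ pairs with (the negative of) the vertical bad term for $iF\overline G$. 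I would verify this by a direct case analysis on the valuations $\val_v(F/G)$ and $\val_v(H/I)$ at the vertex, exactly mirroring Case 2d / Case 3 of the proof of Proposition \ref{lem:ab_not_bad}, but now tracking both $F\overline G$ and $iF\overline G$. The main obstacle will be this vertex-level bookkeeping: carefully checking that the extra $\Sign$ terms produced by Proposition \ref{lem:form_adapt_2} at bad numbers (which can occur for $iF\overline G$ even when they did not occur for $F\overline G$, and vice versa) match up correctly across the average. Once the vertex cancellations are established, additivity of the four $\Ind$'s along the four edges (Proposition \ref{lem:ab_not_bad} and Proposition \ref{lem:form_adapt_2}) plus Definition \ref{def:bigW} gives $\mind(F/G\cdot H/I\,|\,\partial\Gamma) = \mind(F/G\,|\,\partial\Gamma) + \mind(H/I\,|\,\partial\Gamma)$ directly.

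Alternatively — and this may be the cleaner route — I would try to deduce the result formally from Proposition \ref{lemwisalogeven} by a valuation-splitting trick: write $F/G = (Z-z_1)^{a_1}\cdots(Z-z_k)^{a_k}\cdot U$ where $z_1,\dots,z_k$ are the vertices of $\Gamma$ and $U$ has value $0$ at every vertex, and similarly for $H/I$. If one can show $\mind$ is additive whenever one factor has even valuation at all vertices (immediate from Proposition \ref{lemwisalogeven} applied to $\ind$ of both $F\overline G$ and $iF\overline G$, since multiplication by $i$ does not change valuations), and separately handle the linear factors $(Z - z_j)$ at vertices by the explicit computation $\mind(Z-z_0\,|\,\partial\Gamma) = \frac14$ for $z_0$ a vertex together with $\mind((Z-z_j)^2\,|\,\partial\Gamma)$ computed via the even case, then one can bootstrap: $(Z-z_j)^{a_j} = (Z-z_j)^{2\lfloor a_j/2\rfloor}\cdot(Z-z_j)^{a_j \bmod 2}$, reducing everything to at most one linear factor per vertex. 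The hard part in this approach is proving $\mind$ is additive when multiplying by a single linear factor $(Z-z_j)$ centered at a vertex — essentially the simplest genuinely new case — and I would attack that by the direct $\Sign$ computation sketched above, which is short because one of the two polynomials becomes $T$ or a constant on each edge through the vertex.
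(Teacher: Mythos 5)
Your guiding idea --- average $\ind(F\overline G)$ with $\ind(iF\overline G)$ and cancel the vertex $\Sign$ terms because multiplication by $i$ swaps real and imaginary parts --- is indeed the mechanism of the paper's proof, but your central claim that the eight $\Sign$ contributions at a vertex ``telescope to zero regardless of parities'' with the $i$ placed on one fixed factor is false, and this is exactly where the paper's parity case analysis is unavoidable. If both $\val_{z_0}(F/G)$ and $\val_{z_0}(H/I)$ are odd at the vertex $z_0=(x_0,y_0)$, your decomposition does not even produce the right quantity: the two brackets $\bigl[\ind(F\overline GH\overline I)-\ind(F\overline G)-\ind(H\overline I)\bigr]$ and $\bigl[\ind(iF\overline GH\overline I)-\ind(iF\overline G)-\ind(H\overline I)\bigr]$ sum to $4\mind(F\overline GH\overline I)-4\mind(F\overline G)-4\ind(H\overline I)$, and $\ind(H\overline I)\neq\mind(H\overline I)$ when $H/I$ has odd valuation at a vertex; correspondingly the vertex cancellation fails. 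Concretely, writing $p+iq$ and $r+is$ for the local units of $F\overline G$ and $H\overline I$ at $z_0$ and using the expansions (\ref{eq:P1Q1epos}), (\ref{eq:P4Q4eposodd}), (\ref{eq:R1S1fpos}), (\ref{eq:R4S4fposodd}), the two horizontal terms are $\sign((ps+qr)qs)+\sign((pr-qs)ps)$ while the two vertical terms are $\sign(-(ps+qr)pr)+\sign((pr-qs)qr)$; for $p=q=s=1$, $r=2$ these sums are $2$ and $0$. The paper resolves this by splitting on parities: when exactly one valuation is odd the $i$ goes on that factor only (and $\mind=\ind$ for the even factor by Remark \ref{rem:val_vertex_even}), whereas when both are odd the $i$ must go on \emph{both} factors, using $\ind(iF\overline G\cdot iH\overline I)=\ind(-F\overline GH\overline I)=\ind(F\overline GH\overline I)$, legitimate because the product then has even valuation at the vertices. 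Your proposal never identifies this dichotomy, and your ``direct case analysis mirroring Case 2d/Case 3'' would run into it immediately.

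Two further gaps. First, the placement of the $i$ is a single global choice that must serve all four edges simultaneously, so different vertices cannot sit in different parity configurations; the paper secures this by first subdividing $\Gamma$ through a generic interior point so that $F,G,H,I$ vanish at most at one vertex of each subrectangle (at the three clean vertices the $\Sign$ terms cancel by plain evaluation). Your sketch contains no such reduction. Second, the key step of your alternative route --- that additivity of $\mind$ when \emph{one} factor has even valuation at all vertices is ``immediate from Proposition \ref{lemwisalogeven}'' --- is unjustified: that proposition requires \emph{both} factors to have even valuation at the vertices, and the even--odd case is precisely the nontrivial cross-cancellation computation above, so the alternative route assumes what has to be proved. (Handling bad numbers head-on via Proposition \ref{lem:form_adapt_2} instead of the paper's generic-constant reduction through Lemma \ref{multiplybyconstant} is possible in principle, but you leave that bookkeeping, which you yourself flag as the main obstacle, uncarried out.)
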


Before the proofs of these propositions, we 
perform 
some preliminary 
computations and introduce some notation we will use repeatedly in the rest of the section.

We consider a new variable $\overline Z$, 
together with the inclusion 
$\C[Z, \overline Z] \subset \C[X, Y]$
through the identities $Z = X + iY, \overline Z = X - iY$.
For $G(Z)  =  \sum _j \gamma_j Z^j \in \C[Z]$, let 
$\overline G (\overline Z) =   \sum _j \overline {\gamma_j} \overline {Z}^j \in 
\C[\overline Z]$.
In this way, note that
$$
\overline{G}(\overline Z) = \overline{G}(X, Y) \in \C[X, Y].
$$

Let $\Gamma = [x_0, x_1] \times [y_0, y_1]$. We define $z_0 = x_0 + iy_0 \in \C$ and
$$
\begin{array}{ll}
e=\val_{z_0}(F/G), \quad  &  
p+iq= F_{z_0}(z_0) \overline {G_{z_0}(z_0)} \ne 0,  \\[2mm] 
f=\val_{z_0}(H/I),  &
r+is= H_{z_0}(z_0) \overline {I_{z_0}(z_0)} \ne 0. 
\end{array}
$$

If $e \ge 0$, since $F$ and $G$ are coprime, 
$F(Z)\overline G(\overline Z) = (Z-z_0)^e F_{z_0}(Z)\overline G_{z_0}(\overline Z)$
with
\begin{equation} \label{eq:Fz0barGz0}
 F_{z_0}(Z)\overline G_{z_0}(\overline Z) 
= p+iq + 
(Z-z_0)A(Z, \overline Z) + (\overline Z - \overline {z_0})B(Z, \overline Z) \in \C[Z, \overline Z] \subset \C[X, Y]
\end{equation}
for some $A, B \in \C[Z, \overline Z]$. 
Therefore, 
\begin{equation}\label{eq:P1Q1epos}
\begin{array}{ccc}
P_1(T) & = & (T-x_0)^e\left(p + (T-x_0)A_1(T)\right), \\[2mm]
Q_1(T) & = & (T-x_0)^e\left(q + (T-x_0)B_1(T)\right), \\[2mm]
\end{array}
\end{equation}
for some $A_1, B_1 \in \R[T]$.
If $e$ is even, 
\begin{equation} \label{eq:P4Q4eposeven}
\begin{array}{ccc}
P_4(T) & = & (-1)^{\frac{e}2}(T-y_0)^e\left(p + (T-y_0)A_{4,{\rm e}}(T)\right), \\[2mm]
Q_4(T) & = & (-1)^{\frac{e}2}(T-y_0)^e\left( q + (T-y_0)B_{4,{\rm e}}(T)\right), \\[2mm]
\end{array}
\end{equation}
for some $A_{4,{\rm e}}, B_{4,{\rm e}} \in \R[T]$. 
If $e$ is odd, 
\begin{equation} \label{eq:P4Q4eposodd}
\begin{array}{ccc}
P_4(T) & = & (-1)^{\frac{e-1}2}(T-y_0)^e\left(-q + (T-y_0)A_{4,{\rm o}}(T)\right), \\[2mm]
Q_4(T) & = & (-1)^{\frac{e-1}2}(T-y_0)^e\left(p + (T-y_0)B_{4,{\rm o}}(T) \right), \\[2mm]
\end{array}
\end{equation}
for some $A_{4,{\rm o}}, B_{4,{\rm o}} \in \R[T]$.

Similarly, if $e<0$, since $F$ and $G$ are coprime,  
$F(Z)\overline G(\overline Z) = 
(\overline Z - \overline{z_0})^{-e} F_{z_0}(Z)\overline G_{z_0}(\overline Z)$
with \linebreak
$F_{z_0}(Z)\overline G_{z_0}(\overline Z)$ as in (\ref{eq:Fz0barGz0}). 
Therefore, we obtain formulas for $P_1, Q_1, P_4, Q_4$ as in 
(\ref{eq:P1Q1epos}), (\ref{eq:P4Q4eposeven}) and (\ref{eq:P4Q4eposodd})
replacing $e$ by $-e$, and additionally multiplying $P_4$ and $Q_4$ by (-1) in 
(\ref{eq:P4Q4eposodd}) ($e$ odd). 

If $f \ge 0$, since $H$ and $I$ are coprime, 
$H(Z)\overline I(\overline Z) = (Z-z_0)^e H_{z_0}(Z)\overline I_{z_0}(\overline Z)$
with
\begin{equation} \label{eq:Hz0barIz0}
 H_{z_0}(Z)\overline I_{z_0}(\overline Z) 
= r+is + 
(Z-z_0)C(Z, \overline Z) + (\overline Z - \overline {z_0})D(Z, \overline Z) \in \C[Z, \overline Z] \subset \C[X, Y]
\end{equation}
for some $C, D \in \C[Z, \overline Z]$. 
Therefore, 
\begin{equation}\label{eq:R1S1fpos}
\begin{array}{ccc}
R_1(T) & = & (T-x_0)^f\left(r + (T-x_0)C_1(T)\right), \\[2mm]
S_1(T) & = & (T-x_0)^f\left(s + (T-x_0)D_1(T)\right), \\[2mm]
\end{array}
\end{equation}
for some $C_1, D_1 \in \R[T]$.
If $f$ is even, 
\begin{equation} \label{eq:R4S4fposeven}
\begin{array}{ccc}
R_4(T) & = & (-1)^{\frac{f}2}(T-y_0)^f\left(r + (T-y_0)C_{4,{\rm e}}(T)\right), \\[2mm]
S_4(T) & = & (-1)^{\frac{f}2}(T-y_0)^f\left( s + (T-y_0)D_{4,{\rm e}}(T)\right), \\[2mm]
\end{array}
\end{equation}
for some $C_{4,{\rm e}}, D_{4,{\rm e}} \in \R[T]$. 
If $f$ is odd, 
\begin{equation} \label{eq:R4S4fposodd}
\begin{array}{ccc}
R_4(T) & = & (-1)^{\frac{f-1}2}(T-y_0)^f\left(-s + (T-y_0)C_{4,{\rm o}}(T)\right), \\[2mm]
S_4(T) & = & (-1)^{\frac{f-1}2}(T-y_0)^f\left(r + (T-y_0)D_{4,{\rm o}}(T) \right), \\[2mm]
\end{array}
\end{equation}
for some $C_{4,{\rm o}}, D_{4,{\rm o}} \in \R[T]$. 

Finally, if $f<0$, since $H$ and $I$ are coprime,
$H(Z)\overline I(\overline Z) = 
(\overline Z - \overline{z_0})^{-f} H_{z_0}(Z)\overline I_{z_0}(\overline Z)$
with $H_{z_0}(Z)\overline I_{z_0}(\overline Z)$ as in (\ref{eq:Hz0barIz0}). 
Therefore, we obtain formulas for $R_1, S_1, R_4, S_4$ as in 
(\ref{eq:R1S1fpos}), (\ref{eq:R4S4fposeven}) and (\ref{eq:R4S4fposodd})
replacing $f$ by $-f$, and additionally multiplying $R_4$ and $S_4$ by (-1) in 
(\ref{eq:R4S4fposodd}) ($f$ odd).

Now, we prove 
a lemma, which is a particular case of Proposition 
\ref{lemwisalogeven} where one of the rational functions
is a constant.

\begin{lemma}\label{multiplybyconstantforw}
Let $F/G \in \C(Z) \setminus \{0\}$, $\gamma \in \C\setminus\{0\}$ and 
$\Gamma \subset \R^2$ a rectangle
such that $F/G$ 
has even valuation at the vertices of $\Gamma$.
Then
$$\ind(\gamma  F / G \, | \, \partial \Gamma) = \ind(F / G\, | \, \partial \Gamma).$$
\end{lemma}

Using Lemma \ref{multiplybyconstantforw} and the definition of $\mind$ (Definition \ref{def:bigW}), we obtain immediately the following result.

\begin{lemma} \label{rem:val_vertex_even}
If $F/G$
has even valuation at the vertices of $\Gamma$, 
$$
\mind(F /  G \, | \, \partial \Gamma) = 
\ind(F / G \, | \, \partial \Gamma).
$$
\end{lemma}

\begin{proof}{Proof of Lemma \ref{multiplybyconstantforw}:}
Let $\Gamma = [x_0, x_1] \times [y_0, y_1]$ and 
$\gamma = \alpha + i \beta$.
The statement is clear if $\beta=0$, so we suppose 
$\beta\not=0$. 
We take $H = \alpha + i \beta$ and $I = 1$,
and using Notation \ref{notn:pols_on_sides}  
we have
$$R_1(T)=R_2(T)=R_3(T)=R_4(T)=\alpha,$$
$$S_1(T)=S_2(T)=S_3(T)=S_4(T)=\beta.$$
Since $S_1,S_2, S_3,S_4$ are constant, there are no bad numbers for 
$P_1, Q_1, R_1, S_1;$
$P_2, Q_2, R_2, S_2;$ \linebreak
$P_3, Q_3, R_3, S_3$ or
$P_4, Q_4, R_4, S_4$. 
Using Proposition \ref{lem:ab_not_bad} (four times) 
as explained at the beginning of the section, 
and the fact that 
$\ind(\alpha+i\beta\, | \, \partial \Gamma) = 0$, 
we have 
$$
\begin{array}{rcl} 
&
2 \, \Big(
\ind(\gamma F / G \, | \, \partial \Gamma)  
-  \ind(F / G \, | \, \partial \Gamma) \Big)
 \\[3mm]
= & 2 \, \Big( \ind((\alpha +i\beta)F \overline G \, | \, \partial \Gamma)  
- \, \ind(F \overline G \, | \, \partial \Gamma) 
- \, \ind(\alpha+i\beta\, | \, \partial \Gamma)  \Big)\\
[3mm]
= &   
- \ \Var_{x_0}^{x_1}(\beta P_1+  \alpha Q_1, \beta Q_1)
\ - \ \Var_{y_0}^{y_1}(\beta P_2 + \alpha Q_2, \beta  Q_2)
\\[3mm]
& - \ \Var_{x_1}^{x_0}(\beta P_3 + \alpha Q_3, \beta Q_3)
\ - \  \Var_{y_1}^{y_0}(\beta P_4 + \alpha Q_4, \beta Q_4).
\end{array}
$$
Therefore, it is enough to prove that the four $\Var$ add up to $0$. 
Concentrating at vertex $(x_0, y_0)$, we 
will prove that 
$${\Sign}(\beta P_1 + \alpha Q_1, \beta Q_1, x_0) = {\Sign}(\beta P_4 + \alpha Q_4, \beta Q_4, y_0).
$$

Since
$p+iq= F_{z_0}(z_0) \overline {G_{z_0}(z_0)} \ne 0$, we have that $\beta p+ \alpha q$, $\beta q$ are not simultaneously 0.
If $e \ge 0$, using 
(\ref{eq:P1Q1epos}) and (\ref{eq:P4Q4eposeven})
we obtain 
$$
{\Sign}(\beta P_1 + \alpha Q_1, \beta Q_1, x_0)
={\sign}((\beta p + \alpha q) \beta q) 
= {\Sign}(\beta P_4 + \alpha Q_4, \beta Q_4, y_0).
$$
If $e < 0$ the same identity holds with a similar proof. 

Finally, the analysis for the three remaining vertices $(x_1, y_0)$, $(x_1, y_1)$
and $(x_0, y_1)$ is identical. 
\end{proof}

\begin{proof}{Proof of Proposition \ref{lemwisalogeven}:}
Let $\Gamma = [x_0, x_1] \times [y_0, y_1]$ and 
take $\gamma = \alpha + i \beta \in \C$. Multiplying $F / G$ by 
$\gamma$ and considering for instance the parametrization of the
bottom edge of $\Gamma$, we define 
$$
\begin{array}{cc}
P_{\gamma, 1}(T) =  (\gamma F \overline G)_{\rm re}(T, y_0), &
Q_{\gamma, 1}(T) =  (\gamma F \overline G)_{\rm im}(T, y_0). \\
\end{array}
$$
Using Notation \ref{notn:pols_on_sides} we have 
$$
\begin{array}{ccc}
P_{\gamma, 1}(T) & = & \alpha P_1(T) - \beta Q_1(T), \\[2mm] 
Q_{\gamma, 1}(T) & = & \alpha Q_1(T) + \beta P_1(T). 
\end{array}
$$
Let $c \in \R$. 
If $\val_c(P_1/Q_1) < 0$, then for $\beta \ne 0$ we have  
$\val_c (P_{\gamma, 1}/Q_{\gamma, 1}) \ge 0$. 
If $\val_c(P_1/Q_1) \ge 0$, then for generic $\alpha, \beta \in \R$ we still have  
$\val_c (P_{\gamma, 1}/Q_{\gamma, 1}) \ge 0$.

This implies that, multiplying $F / G$ and $H / I$ by suitable constants if necessary, we can 
assume without loss of generality that $x_0, x_1$ are not bad numbers for $P_1, Q_1, R_1, S_1$ or $P_3, Q_3, S_3, R_3$ and 
 $y_0, y_1$ are not bad numbers for $P_2, Q_2, R_2, S_2$  or $P_4, Q_4, S_4, R_4$.
In addition we may assume that 
$F_{z}(z)\overline {G_{z}(z)}$ and 
$H_{z}(z)\overline {I_{z}(z)}$
are not real for each of the four vertices $z$ of $\Gamma$.
By Lemma \ref{multiplybyconstantforw}, this does not change $w(F / G \, | \, 
\partial \Gamma)$, 
$w(H / I \, | \, \partial \Gamma)$
or
$w(F/G \cdot H / I \, | \, \partial \Gamma)$.

Again, using Proposition \ref{lem:ab_not_bad} (four times) we have 
$$
\begin{array}{rcl}
&2 \, \Big( \ind(F / G \cdot  H / I \, | \, \partial \Gamma)  
-  \, \ind(F / G \, | \, \partial \Gamma) 
-  \, \ind(H /  I  \, | \, \partial \Gamma) \Big)
&   \\[3mm]
= &2 \, \Big( \ind(F  \overline G H \overline I \, | \, \partial \Gamma)  
-  \, \ind(F \overline G \, | \, \partial \Gamma) 
-  \, \ind(H \overline  I  \,  | \, \partial \Gamma) \Big)
&   
\\[3mm]
= &   
- \ \Var_{x_0}^{x_1}( P_1 S_1+  Q_1 R_1, Q_1 S_1)
\ - \ \Var_{y_0}^{y_1}(P_2 S_2 + Q_2 R_2, Q_2 S_2)
\\[3mm]
& - \ \Var_{x_1}^{x_0}( P_3 S_3+  Q_3 R_3, Q_3 S_3)
\ - \  \Var_{y_1}^{y_0}(P_4 S_4+  Q_4 R_4, Q_4 S_4).
\end{array}
$$
Therefore, it is enough to prove that the four $\Var$ add up to $0$. 
Concentrating at vertex $(x_0, y_0)$, 
we will prove that 
$$
{\Sign}(P_1 S_1 + Q_1 R_1, Q_1 S_1, x_0) =
{\Sign}(P_4 S_4 + Q_4 R_4,  Q_4 S_4, y_0).
$$
Since  $p+iq=F_{z_0}(z_0)\overline {G_{z_0}(z_0)} \ne0$  and  $r+is=H_{z_0}(z_0)\overline {I_{z_0}( z_0)} \ne 0$ 
are such that $q \ne 0$ and $s\ne 0$, we have that $qs \ne 0$. 
If $e \ge 0$ and $f\ge 0$,
using (\ref{eq:P1Q1epos}), (\ref{eq:P4Q4eposeven}), (\ref{eq:R1S1fpos}) and 
(\ref{eq:R4S4fposeven}) we obtain
 $${\Sign}( P_1 S_1+ Q_1 R_1, Q_1S_1, x_0)={\sign}((p s+ q r) q  s)
 = {\Sign}(P_4 S_4+ Q_4 R_4, Q_4 S_4, y_0).$$

If $e<0 $ or $f<0$  the same identity holds 
with a similar proof. 

Finally, the analysis for the three remaining vertices $(x_1, y_0)$, $(x_1, y_1)$
and $(x_0, y_1)$ is identical. 
\end{proof}

We focus now on Proposition 
\ref{Wisalogfinal}.
As before, we prove first 
a
lemma, which is a particular case of it where one of the rational functions
is a constant.

\begin{lemma}\label{multiplybyconstant}
Let $F/G \in \C(Z) \setminus \{0\}$, $\gamma \in \C\setminus \{0\}$ and $\Gamma \subset \R^2$ a rectangle.
Then 
$$\mind( \gamma  F / G \, | \, \partial \Gamma) = \mind(F / G \, | \, \partial \Gamma).$$
\end{lemma}
\begin{proof}{Proof:} Let $\Gamma = [x_0, x_1] \times [y_0,y_1]$ and
$\gamma = \alpha + i \beta$.
The statement is clear if $\beta=0$, so we suppose 
$\beta\not=0$. We take $H = \alpha + i \beta$ and $I = 1$, and using 
Notation \ref{notn:pols_on_sides} we have
$$R_1(T)=R_2(T)=R_3(T)=R_4(T)=\alpha,$$
$$S_1(T)=S_2(T)=S_3(T)=S_4(T)=\beta.$$
Since $S_1,S_2,S_3,S_4$ are constant, there are no bad numbers for 
$P_1, Q_1, R_1, S_1; P_2, Q_2, R_2, S_2$; \linebreak
$P_3, Q_3, R_3, S_3$;
$P_4, Q_4, R_4, S_4$;
$-Q_1, P_1, R_1, S_1; -Q_2, P_2, R_2, S_2; -Q_3, P_3, R_3, S_3$ or 
$-Q_4, P_4, R_4, S_4$.
Using Proposition \ref{lem:ab_not_bad} (eight times) 
and the fact that 
$\ind(\alpha+i\beta\, | \, \partial \Gamma) = 0$, 
we have
$$
\begin{array}{rcl}
& 4 \, \Big( \mind(\gamma F / G \, | \, \partial \Gamma)  
-  \, \mind(F / G \, | \, \partial \Gamma) 
 \Big) 
& \\[3mm]
=
& 4  \,  \Big( \mind((\alpha +i\beta)F \overline G \, | \, \partial \Gamma)  
-  \, \mind(F\overline G \, | \, \partial \Gamma)  \Big ) 
& 
\\[3mm]
= 
& 2 \Big( \ind((\alpha +i\beta)F\overline G  \, | \, \partial \Gamma)  
-  \, \ind(F\overline G \, | \, \partial \Gamma) 
-  \, \ind(\alpha+i\beta\, | \, \partial \Gamma) \Big)  \\[3mm]
& + \ 2 \, \Big( \ind((\alpha +i\beta)iF\overline G  \, | \, \partial \Gamma)  
-  \, \ind(iF\overline G \, | \, \partial \Gamma) 
-  \, \ind(\alpha+i\beta\, | \, \partial \Gamma) \Big) &  \\[3mm]
= &   
- \ \Var_{x_0}^{x_1}(\beta P_1+  \alpha Q_1, \beta Q_1)
\ - \ \Var_{y_0}^{y_1}(\beta P_2 + \alpha Q_2, \beta Q_2)
\\[3mm]
& - \ \Var_{x_1}^{x_0}(\beta P_3 + \alpha Q_3, \beta Q_3)
\ - \  \Var_{y_1}^{y_0}(\beta P_4 + \alpha Q_4, \beta Q_4)
\\[3mm] 
&
- \  \Var_{x_0}^{x_1}(-\beta Q_1 + \alpha P_1, \beta P_1)
\ - \ \Var_{y_0}^{y_1}(-\beta Q_2 + \alpha P_2,\beta P_2)
\\[3mm]
&
- \ \Var_{x_1}^{x_0}(-\beta Q_3 + \alpha P_3, \beta P_3)
\ - \  \Var_{y_1}^{y_0}(-\beta Q_4 + \alpha P_4, \beta P_4).
\end{array}
$$
Therefore, it is enough to prove that the eight $\Var$ add up to $0$. 
Zooming around vertex $(x_0, y_0)$ we have 
\begin{center}
\begin{tikzpicture}
      \draw[-] (-2, 1.5) -- (-2,0) -- (4,0);
      \draw[dashed] (4,0) -- (6,0);
      \draw[dashed] (-2,1.5) -- (-2,2.2);
       \node at (-1.2,0.4) {$(x_0, y_0)$};
       \node at (2.4,1.8) {$\Gamma$};
      \node at (0.6,-0.6) {$+ \frac12 {\Sign}(\beta P_1 + \alpha Q_1, \beta Q_1, x_0)$};
       \node at (0.7,-1.4) {$+ \frac12 {\Sign}(-\beta Q_1+ \alpha P_1, \beta P_1, x_0)$};
\node at (-4.9,0.4) {$- \frac12{\Sign}(-\beta Q_4 + \alpha P_4, \beta P_4, y_0)$};
\node at (-5,1.2) {$- \frac12{\Sign}(\beta P_4 + \alpha Q_4, \beta Q_4, y_0)$};
     \end{tikzpicture}
\end{center}

We will prove that 
\begin{equation} \label{eq:four_signs_at_vertex}
\begin{array}{ccc}
& {\Sign}(\beta P_1 + \alpha Q_1, \beta Q_1, x_0) + 
{\Sign}(-\beta Q_1 + \alpha P_1,  \beta P_1 , x_0) & = \\[3mm]
= &
{\Sign}(\beta P_4 + \alpha Q_4,  \beta Q_4 , y_0) +
{\Sign}(- \beta Q_4 + \alpha P_4, \beta P_4 , y_0).
\end{array}
\end{equation}

Since $p+iq = F_{z_0}(z_0)\overline {G_{z_0}(z_0)} \ne 0$ we have that 
$\beta p + \alpha q$ and $\beta q$ are not simultaneously 0
and $-\beta q + \alpha p$, $\beta p$ are not simultaneously 0.
Suppose $e \ge 0$.

If $e$ is even,
using (\ref{eq:P1Q1epos}) 
and  (\ref{eq:P4Q4eposeven})
we have
 $${\Sign}(\beta P_1 + \alpha Q_1, \beta Q_1, x_0)={\sign}((\beta p + \alpha q) \beta q) = {\Sign}(\beta P_4 + \alpha Q_4, \beta Q_4, y_0),$$
$${\Sign}(-\beta Q_1+ \alpha P_1, \beta P_1, x_0)={\sign}((-\beta q + \alpha p) \beta p) = {\Sign}(-\beta Q_4+ \alpha P_4, \beta P_4, y_0).$$

If $e$  is odd, using (\ref{eq:P1Q1epos}) 
and (\ref{eq:P4Q4eposodd}) we have
 $${\Sign}(\beta P_1 + \alpha Q_1, \beta Q_1, x_0)={\sign}((\beta p + \alpha q), \beta q) = {\Sign}(-\beta Q_4+ \alpha P_4, \beta P_4, y_0),
 $$
$${\Sign}(-\beta Q_1+ \alpha P_1, \beta P_1, x_0)={\sign}((-\beta q + \alpha p) \beta p) = 
{\Sign}(\beta P_4 + \alpha Q_4, \beta Q_4, y_0).$$

Then, in both cases we have that identity (\ref{eq:four_signs_at_vertex}) holds. 
If $e < 0$ the same identity holds with a similar proof. 

Finally, the analysis for the three remaining vertices $(x_1, y_0)$, $(x_1, y_1)$
and $(x_0, y_1)$ is identical. 
\end{proof}

\begin{proof}
{Proof of Proposition \ref{Wisalogfinal}:} Let $\Gamma = [x_0, x_1] \times
[y_0, y_1]$. 
From the definition of $\ind$ and $\mind$, it follows that for any function in  $\C[X, Y]$, if we subdivide $\Gamma$ in four subrectangles $\Gamma_1, \dots, \Gamma_4$ as in the picture below, then the winding number $\ind$ or $\mind$ on $\partial \Gamma$ equals the sum of the 
respective winding numbers on $\partial \Gamma_i$ for $1 \le i \le 4$. 

\begin{center}
\begin{tikzpicture}
      \draw[line width=0.9pt,-] (-3,-0.7) -- (-3,1.8);
      \draw[line width=0.9pt,-] (-1.5,-0.7) -- (-1.5,1.8);
      \draw[line width=0.9pt,-] (0.3,-0.7) -- (0.3,1.8);
      \draw[<-, dashed] (-2.8,-0.4) -- (-2.8,0.3);	
      \draw[<-, dashed] (-2.8,0.9) -- (-2.8,1.5);
      \draw[->, dashed] (-1.7,-0.4) -- (-1.7,0.3);	
      \draw[->, dashed] (-1.7,0.9) -- (-1.7,1.5);
      \draw[<-, dashed] (-1.3,-0.4) -- (-1.3,0.3);	
      \draw[<-, dashed] (-1.3,0.9) -- (-1.3,1.5);
      \draw[->, dashed] (0.1,-0.4) -- (0.1,0.3);	
      \draw[->, dashed] (0.1,0.9) -- (0.1,1.5);
      \draw[line width=0.9pt,-] (-3, -0.7) -- (0.3,-0.7);
      \draw[line width=0.9pt,-] (-3, 0.6) -- (0.3,0.6);
      \draw[line width=0.9pt,-] (-3, 1.8) -- (0.3,1.8);
      \draw[<-, dashed] (-2.7,1.6) -- (-1.8,1.6);	
      \draw[<-, dashed] (-1.2,1.6) -- (0,1.6);
      \draw[->, dashed] (-2.7,0.8) -- (-1.8,0.8);	
      \draw[->, dashed] (-1.2,0.8) -- (0,0.8);
      \draw[<-, dashed] (-2.7,0.4) -- (-1.8,0.4);	
      \draw[<-, dashed] (-1.2,0.4) -- (0,0.4);
      \draw[->, dashed] (-2.7,-0.5) -- (-1.8,-0.5);	
      \draw[->, dashed] (-1.2,-0.5) -- (0,-0.5);
      \node at (-0.55,-0.1) {$\Gamma_2$};
      \node at (-2.15,-0.1) {$\Gamma_1$};
      \node at (-0.55,1.15) {$\Gamma_3$};
      \node at (-2.15,1.15) {$\Gamma_4$};
       \node at (1.2,0.5) {$\Gamma$};
\end{tikzpicture}
\end{center}

If $F, G, H$ and $I$ vanish at more than one vertex of $\Gamma$, we choose a point $z=(x,y) \in \Gamma$ such that $(x,y)$, $(x_0,y)$, $(x_1,y)$, $(x,y_0)$ and $(x,y_1)$ are not roots of any of them. Using $z$ to subdivide $\Gamma$ in four rectangles, $F,G,H$ and $I$ vanish at most at one vertex of each of these four rectangles. 
So without loss of generality we replace $\Gamma$ by $\Gamma_1$ and we make the assumption that 
$F,G,H$ and $I$ do not vanish at $(x_0,y_1),(x_1,y_1), (x_1,y_0)$. In particular this implies
that $F/G$ and $H/I$ have even valuation at these three vertices of $\Gamma$.

As in the proof of Proposition \ref{lemwisalogeven},
multiplying $F / G$ and $H / I$ by suitable constants if necessary,  
we can suppose without loss of generality that 
$x_0,x_1$ and $y_0, y_1$ are not bad numbers for 
all the finitely many $4$-uples of polynomials we will use along the proof. 
In 
addition we may assume that
$F_{z}(z)\overline {G_{z}(z)}$ and
$H_{z}(z)\overline {I_{z}(z)}$
are not real or purely imaginary for each of the four vertices $z$ of $\Gamma$.
By Lemma \ref{multiplybyconstant}, this does not change
$\mind( F/G \, | \, \partial \Gamma)$, 
$\mind( H/I \, | \, \partial \Gamma)$ or
$\mind( F/G \cdot H/I\, | \, \partial \Gamma)$.

The proof is done in several cases according to the parity of the
valuations $e$ and $f$ of $F/G$ and $H /I$ at $z_0=x_0+iy_0$.

If $e$ and $f$ are both even, then $F/G$ and $H/I$ have even valuation at the four vertices of $\Gamma$ and  the statement follows from  Proposition \ref{lemwisalogeven} since, by 
Lemma \ref{rem:val_vertex_even},   
$$
\mind(F / G \, | \, \partial \Gamma) =\ind(F / G \, | \, \partial \Gamma), \
\mind(H / I \, | \, \partial \Gamma) =\ind(H / I \, | \, \partial \Gamma), \hbox{ and }
\mind(F / G \cdot H / I\, | \, \partial \Gamma) =
\ind(F / G \cdot H / I\, | \, \partial \Gamma).
$$

If $e$ is odd and $f$ is even, 
suppose first $e\ge 0$ and $f\ge 0$.
Since $H/I$ has even valuation at the four vertices of $\Gamma$, by 
Lemma \ref{rem:val_vertex_even}, $\mind(H / I\, | \, \partial \Gamma) =\ind(H / I\, | \, \partial \Gamma)$.
Then, using Proposition \ref{lem:ab_not_bad} (eight times) for 
$P_1, Q_1, R_1, S_1$; 
$P_2, Q_2, R_2, S_2$;
$P_3, Q_3, R_3, S_3$;
$P_4, Q_4, R_4, S_4$; 
$-Q_1, P_1, R_1, S_1$;
$-Q_2, P_2, R_2, S_2$;
$-Q_3, P_3, R_3, S_3$;
and
$-Q_4, P_4, R_4, S_4,$
we have 
$$
\begin{array}{rcl}
& 4 \, \Big( \mind(F  / G \cdot H / I \, | \, \partial \Gamma)  
-  \, \mind(F / G \, | \, \partial \Gamma) 
-  \, \mind(H /  I  \, | \, \partial \Gamma) \Big) 
&  
\\[3mm]
& 4 \, \Big( \mind(F  \overline G H \overline I \, | \, \partial \Gamma)  
-  \, \mind(F \overline G \, | \, \partial \Gamma) 
-  \, \ind(H \overline  I  \, | \, \partial \Gamma) \Big) 
&  
\\[3mm]
=&2 \, \Big( \ind(F  \overline G H \overline I \, | \, \partial \Gamma)  
-   \ind(F \overline G \, | \, \partial \Gamma) 
-   \ind(H \overline  I  \,  | \, \partial \Gamma) \Big) 
&  
\\[3mm]
&+ \ 2 \, \Big( \ind(iF  \overline G H \overline I \, | \, \partial \Gamma)  
-   \ind(iF \overline G \, | \, \partial \Gamma) 
-   \ind(H \overline  I  \, | \, \partial \Gamma) \Big)
&  
\\[3mm]
= &   
- \ \Var_{x_0}^{x_1}( P_1 S_1+  Q_1 R_1, Q_1 S_1)
\ - \ \Var_{y_0}^{y_1}(P_2 S_2 + Q_2 R_2, Q_2 S_2)
\\[3mm]
& - \ \Var_{x_1}^{x_0}( P_3 S_3+  Q_3 R_3, Q_3 S_3)
\ - \  \Var_{y_1}^{y_0}(P_4 S_4+  Q_4 R_4, Q_4 S_4)
\\[3mm]
&
- \  \Var_{x_0}^{x_1}(-Q_1 S_1+  P_1 R_1, P_1 S_1)
\ - \Var_{y_0}^{y_1}(- Q_2 S_2+ P_2 R_2,P_2 S_2)
\\[3mm]
&
- \ \Var_{x_1}^{x_0}(-Q_3 S_3+  P_3 R_3, P_3 S_3)
\ - \  \Var_{y_1}^{y_0}(-Q_4 S_4+  P_4 R_4, P_4 S_4).
\end{array}
$$
Therefore, it is enough to prove that the eight $\Var$ add up to $0$. 
Concentrating around vertex $(x_0, y_0)$
we will prove that 
$$
\begin{array}{rcl}
& {\Sign}(P_1 S_1 + Q_1 R_1, Q_1 S_1, x_0) + 
{\Sign}(-Q_1 S_1+ P_1 R_1,  P_1 S_1, x_0)  & = \\[3mm]
= &
{\Sign}(P_4 S_4 + Q_4 R_4,  Q_4 S_4, y_0) +
{\Sign}(- Q_4 S_4+ P_4 R_4, P_4 S_4, y_0).
\end{array}
$$

Since $p+iq = F_{z_0}(z_0)\overline {G_{z_0}(z_0)} \ne 0$
and $r+is = H_{z_0}(z_0)\overline {I_{z_0}(z_0)} \ne 0$
are such that $p, q, s \ne 0$, we have that $qs \ne 0$ and 
$ps \ne 0$. Then,  using (\ref{eq:P1Q1epos}), (\ref{eq:P4Q4eposodd}), (\ref{eq:R1S1fpos})
and (\ref{eq:R4S4fposeven})
we have
 $${\Sign}( P_1 S_1+ Q_1 R_1, Q_1S_1, x_0)={\sign}((p s+ q r) q  s)
 = {\Sign}(- Q_4 S_4+ P_4 R_4, P_4 S_4, y_0)$$
$${\Sign}(-Q_1 S_1+ P_1 R_1, P_1 S_1, x_0)={\sign}((-q s +  p  r) p  s) 
= {\Sign}((P_4 S_4+ Q_4 R_4, Q_4 S_4,y_0).$$

If $e < 0$ or $f < 0$ the same identity holds with a similar proof.

For the remaining three vertices, since $(F\overline G)_{\re}$
$(F\overline G)_{\im}$, $(H\overline I)_{\re}$ and
$(H \overline I)_{\im}$
do not vanish at them, a simple evaluation gives
$$
{\Sign}(P_1 S_1 + Q_1 R_1, Q_1 S_1, x_1) = 
{\Sign}(P_2 S_2 + Q_2 R_2, Q_2 S_2, y_0),  
$$
$$
{\Sign}(-Q_1 S_1+ P_1 R_1,  P_1 S_1, x_1) =
{\Sign}(-Q_2 S_2+ P_2 R_2,  P_2 S_2, y_0),
$$
$$
{\Sign}(P_2 S_2 + Q_2 R_2, Q_2 S_2, y_1) = 
{\Sign}(P_3 S_3 + Q_3 R_3, Q_3 S_3, x_1),  
$$
$$
{\Sign}(-Q_2 S_2+ P_2 R_2,  P_2 S_2, y_1) =
{\Sign}(-Q_3 S_3+ P_3 R_3,  P_3 S_3, x_1),
$$
$$
{\Sign}(P_3 S_3 + Q_3 R_3, Q_3 S_3, x_0) =
{\Sign}(P_4 S_4 + Q_4 R_4, Q_4 S_4, y_1),
$$
$$
{\Sign}(-Q_3 S_3+ P_3 R_3,  P_3 S_3, x_0) =
{\Sign}(-Q_4 S_4+ P_4 R_4,  P_4 S_4, y_1). 
$$

If $e$ is even and $f$ is odd we interchange $F / G$ with $H / I$ and proceed exactly as before.

If $e$ and $f$ are both odd, suppose first 
$e\ge 0$ and $f\ge 0$.
Since $F/G \cdot H/I$ has even valuation at the four vertices of $\Gamma$, 
by 
Lemma  \ref{rem:val_vertex_even}, $\mind(F / G \cdot H / I\, | \, \partial \Gamma) =\ind(F / G \cdot H / I\, | \, \partial \Gamma)$.
Then using Proposition \ref{lem:ab_not_bad} (eight times) for 
$P_1, Q_1, R_1, S_1$;
$P_2, Q_2, R_2, S_2$;
$P_3, Q_3, R_3, S_3$;
$P_4, Q_4, R_4, S_4$; 
$-Q_1, P_1, -S_1, R_1$;
$-Q_2, P_2, -S_2, R_2$;
$-Q_3, P_3, -S_3, R_3$;
and
$-Q_4, P_4, -S_4, R_4$,
we have 
$$
\begin{array}{rcl}
& 4 \, \Big( \mind(F  / G \cdot H / I \, | \, \partial \Gamma)  
-  \, \mind(F / G \, | \, \partial \Gamma) 
-  \, \mind(H /  I  \, | \,  \partial \Gamma) \Big) 
&  
\\[3mm]
& 4 \, \Big( \ind(F  \overline G H \overline I \, | \, \partial \Gamma)  
-  \, \mind(F \overline G \, | \, \partial \Gamma) 
-  \, \mind(H \overline  I  \, | \,  \partial \Gamma) \Big) 
&  
\\[3mm]
=&2 \, \Big( \ind(F  \overline G H \overline I \, | \, \partial \Gamma)  
-  \, \ind(F \overline G \, | \, \partial \Gamma) 
-  \, \ind(H \overline  I  \, | \,  \partial \Gamma) \Big)
&  
\\[3mm]
&+ \ 2  \,  \Big( \ind(iF  \overline G i H \overline I \, | \, \partial \Gamma)  
-  \, \ind(iF \overline G \, | \, \partial \Gamma) 
-  \, \ind(iH \overline  I  \, | \, \partial \Gamma) \Big)
&  
\\[3mm]
= &   
- \ \Var_{x_0}^{x_1}( P_1 S_1+  Q_1 R_1, Q_1 S_1)
\ - \ \Var_{y_0}^{y_1}(P_2 S_2 + Q_2 R_2, Q_2 S_2)
\\[3mm]
& - \ \Var_{x_1}^{x_0}( P_3 S_3+  Q_3 R_3, Q_3 S_3)
\ - \  \Var_{y_1}^{y_0}(P_4 S_4+  Q_4 R_4, Q_4 S_4)
\\[3mm]
&
- \  \Var_{x_0}^{x_1}(-  Q_1 R_1 -P_1 S_1 , P_1 R_1)
\ - \ \Var_{y_0}^{y_1}( Q_2 R_2 -P_2 S_2-,P_2 R_2)
\\[3mm]
&
- \ \Var_{x_1}^{x_0}(- Q_3 R_3 -P_3 S_3 , P_3 R_3)
\ - \  \Var_{y_1}^{y_0}(-  Q_4 R_4-P_4 S_4 ,  P_4 R_4).
\end{array}
$$
Therefore, it is enough to prove that the eight $\Var$ add up to $0$. 
Concentrating around vertex $(x_0, y_0)$,
we will prove that 
$$
\begin{array}{rcl}
& {\Sign}(P_1 S_1 + Q_1 R_1, Q_1 S_1, x_0) +  
{\Sign}(-Q_1 R_1 -P_1 S_1  ,  P_1 R_1, x_0) & = \\[3mm]
= &
{\Sign}(P_4 S_4 + Q_4 R_4,  Q_4 S_4, y_0)
+{\Sign}(-Q_4 R_4 -P_4 S_4  , P_4 R_4, y_0).
\end{array}
$$
Since $p+iq = F_{z_0}(z_0)\overline {G_{z_0}(z_0)} \ne 0$
and $r+is = H_{z_0}(z_0)\overline {I_{z_0}(z_0)} \ne 0$
are such that $p, q, r, s \ne 0$, we have that $qs \ne 0$ and 
$pr \ne 0$. Then, using (\ref{eq:P1Q1epos}), (\ref{eq:P4Q4eposodd}),
(\ref{eq:R1S1fpos}) and (\ref{eq:R4S4fposodd}) we have
 $${\Sign}( P_1 S_1+ Q_1 R_1, Q_1S_1, x_0)={\sign}((p s+ q r) q  s) =
 {\Sign}(-Q_4 R_4 - P_4 S_4 , P_4 R_4, y_0),$$
$${\Sign}(- Q_1 R_1 - P_1 S_1, P_1 R_1, x_0)={\sign}(-(ps+qr)p r)
= {\Sign}(P_4 S_4+ Q_4 R_4, Q_4 S_4, y_0).$$

If $e < 0$ or $f < 0$ the same identity holds with a similar proof.

For the remaining three vertices, since $(F\overline G)_{\re}$
$(F\overline G)_{\im}$, $(H\overline I)_{\re}$ and
$(H \overline I)_{\im}$
do not vanish at them, a simple evaluation gives
$$
{\Sign}(P_1 S_1 + Q_1 R_1, Q_1 S_1, x_1) = 
{\Sign}(P_2 S_2 + Q_2 R_2, Q_2 S_2, y_0),  
$$
$$
{\Sign}(- Q_1 R_1 - P_1 S_1, P_1R_1, x_1) = 
{\Sign}(- Q_2 R_2 - P_2 S_2, P_2R_2, y_0),  
$$
$$
{\Sign}(P_2 S_2 + Q_2 R_2, Q_2 S_2, y_1) = 
{\Sign}(P_3 S_3 + Q_3 R_3, Q_3 S_3, x_1),  
$$
$$
{\Sign}(- Q_2 R_2 - P_2 S_2, P_2R_2, y_1) = 
{\Sign}(-Q_3 R_3 - P_3 S_3, P_3R_3, x_1),  
$$
$$
{\Sign}(P_3 S_3 + Q_3 R_3, Q_3 S_3, x_0) =
{\Sign}(P_4 S_4 + Q_4 R_4, Q_4 S_4, y_1),
$$
$$
{\Sign}(-Q_3 R_3 - P_3 S_3, P_3R_3, x_0) =
{\Sign}(-Q_4 R_4 - P_4 S_4 , P_4R_4, y_1).
$$
\end{proof}

\section{Proofs of the main results} \label{sec:proof_main_results}

In this final section, we focus on the proof of 
our main results, using the additivity properties of $\ind$ and $\mind$.

In \cite[Proposition 4.4]{Eis}, it is shown that $\ind$ does the right counting in the case of 
monic linear polynomials. Similarly, the following lemma shows that $\mind$ does the right counting in the basic cases we will need.

\begin{lemma}\label{prop:oldwn_linear_conj}
Let $\Gamma \subset \R^2$ be a rectangle. 

For $\gamma \in \C$, $\mind(\gamma \, | \partial \Gamma) = 0$. 

For  $F(Z) = Z - z_0$ with $z_0 \in \C$,
$$
\mind(F \, | \, \partial \Gamma) = 
\left\{
\begin{array}{cl}
1 & \hbox{if } z_0 \hbox{ is in the interior of } \Gamma, \\[2mm] 
\frac12 & \hbox{if } z_0 \hbox{ is in one of the edges of } \Gamma, \\[2mm]
\frac14& \hbox{if } z_0 \hbox{ is in one of the vertices of } \Gamma, \\[2mm]
0 & \hbox{if } z_0 \hbox{ is in the exterior of } \Gamma. \\[2mm]
\end{array}
\right.
$$ 
and
$$
\mind(1/F \, | \, \partial \Gamma) = 
\left\{
\begin{array}{cl}
-1 & \hbox{if } z_0 \hbox{ is in the interior of } \Gamma, \\[2mm] 
-\frac12 & \hbox{if } z_0 \hbox{ is in one of the edges of } \Gamma, \\[2mm]
-\frac14& \hbox{if } z_0 \hbox{ is in one of the vertices of } \Gamma, \\[2mm]
0 & \hbox{if } z_0 \hbox{ is in the exterior of } \Gamma. \\[2mm]
\end{array}
\right.
$$ 
\end{lemma}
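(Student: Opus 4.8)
The plan is to compute $\mind$ in each case directly from its definition $\mind(F\,|\,\partial\Gamma)=\frac12\bigl(\ind(F\,|\,\partial\Gamma)+\ind(iF\,|\,\partial\Gamma)\bigr)$, using the already-known values of $\ind$ for linear polynomials (quoted from \cite[Proposition 4.4]{Eis}) and for constants. For the constant case, $\ind(\gamma\,|\,\partial\Gamma)=0$ and $\ind(i\gamma\,|\,\partial\Gamma)=0$ since $i\gamma$ is again a constant, so $\mind(\gamma\,|\,\partial\Gamma)=0$ immediately. For $F(Z)=Z-z_0$, the subtlety is that $iF=i(Z-z_0)=iZ-iz_0$ is a linear polynomial in $Z$ but \emph{not} monic; still, $iF=i\,(Z-z_0)$ and by Lemma \ref{multiplybyconstant} (multiplication by the nonzero constant $i$ does not change $\mind$), or more elementarily by writing $iF(Z)=i(Z-z_0)$ and noting the curve $(iF)\circ\partial\Gamma$ is the rotation by $\pi/2$ of $F\circ\partial\Gamma$, we have $\ind(iF\,|\,\partial\Gamma)$ equal to a quantity we can control. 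The cleanest route: rewrite $iF = i\cdot(Z-z_0)$ and invoke Lemma \ref{multiplybyconstant} to get $\mind(F\,|\,\partial\Gamma)=\mind(Z-z_0\,|\,\partial\Gamma)$ trivially — but that is circular. Instead I would argue directly on $\ind(iF\,|\,\partial\Gamma)$.

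The key computation is therefore: \emph{for $F=Z-z_0$, show $\ind(iF\,|\,\partial\Gamma)$ equals the tabulated value of $\ind$ for a monic linear polynomial vanishing at $z_0$}, i.e. $1$ (interior), $\frac12$ (edge), $\frac14$ (vertex), $0$ (exterior). Here is the point: $iF$ and $F$ have the same zero locus $\{z_0\}$, and the winding-number heuristic recorded after Definition \ref{def:winding_number} counts half-turns of the curve $F\circ\partial\Gamma$ across the $X$-axis; multiplying by $i$ rotates the curve by $\pi/2$, which turns crossings of the $X$-axis into crossings of the $Y$-axis and vice versa, but does not change the total number of turns when $z_0\notin\partial\Gamma$. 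So when $z_0$ is in the interior or exterior, $\ind(iF\,|\,\partial\Gamma)=\ind(F\,|\,\partial\Gamma)$ and the formula for $\mind$ follows. The delicate cases are $z_0$ on an edge or a vertex, where $\ind(F\,|\,\partial\Gamma)$ already equals $\frac12$ resp. $\frac14$; I would verify that $\ind(iF\,|\,\partial\Gamma)$ takes the same value by an explicit Cauchy-index computation on the four sides, exactly as in the proof of \cite[Proposition 4.4]{Eis} but with the roles of real and imaginary parts swapped. Concretely, one writes $z_0=(a,b)$, expands $(iF)_{\rm re}(T,y_0)=-(F_{\rm im})(T,y_0)$ and $(iF)_{\rm im}(T,y_0)=(F_{\rm re})(T,y_0)$, etc., and evaluates the four Cauchy indices $\Ind_{x_0}^{x_1}, \Ind_{y_0}^{y_1}, \Ind_{x_1}^{x_0}, \Ind_{y_1}^{y_0}$ appearing in Definition \ref{def:winding_number}; each is a half-integer read off from signs and valuations at the endpoints, and the bookkeeping is the same as in the known case up to relabelling.

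Finally, for $\mind(1/F\,|\,\partial\Gamma)$ with $F=Z-z_0$, by Definition \ref{def:winding_number_rational} one has $\mind(1/F\,|\,\partial\Gamma)=\mind(\overline F\,|\,\partial\Gamma)$ where $\overline F(X,Y)=(X-a)-i(Y-b)$ in the notation $z_0=a+ib$; equivalently, $\ind(1/F\,|\,\partial\Gamma)=\ind(\overline F\,|\,\partial\Gamma)$. Since conjugation reflects the plane across the $X$-axis, it reverses orientation of the curve $F\circ\partial\Gamma$, so $\ind(\overline F\,|\,\partial\Gamma)=-\ind(F\,|\,\partial\Gamma)$ and likewise $\ind(i\overline F\,|\,\partial\Gamma)=\ind(\overline{-iF}\,|\,\partial\Gamma)=-\ind(-iF\,|\,\partial\Gamma)=-\ind(iF\,|\,\partial\Gamma)$ (the last equality because negation of the curve is a rotation by $\pi$, leaving turning number unchanged when $z_0\notin\partial\Gamma$, and is handled by an endpoint computation when it is). Hence $\mind(1/F\,|\,\partial\Gamma)=-\mind(F\,|\,\partial\Gamma)$, giving the second table from the first. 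Alternatively — and this is probably the slickest presentation — one observes $1/F\cdot F = 1$, applies Proposition \ref{Wisalogfinal} to get $\mind(1/F\,|\,\partial\Gamma)+\mind(F\,|\,\partial\Gamma)=\mind(1\,|\,\partial\Gamma)=0$, and concludes at once. I would use the Proposition \ref{Wisalogfinal} route for the $1/F$ part since it is available and removes all case analysis there.

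\textbf{Main obstacle.} The only real work is the on-edge and on-vertex subcases of computing $\ind(iF\,|\,\partial\Gamma)$ for $F=Z-z_0$: one must carefully track how the half-integer contributions from each side of $\partial\Gamma$ change when real and imaginary parts are interchanged, in particular confirming that the $\frac14$ at a vertex is reproduced rather than, say, becoming $0$ or $\frac12$. This is a finite, explicit sign-and-valuation computation — the analogue of the proof of \cite[Proposition 4.4]{Eis} — and I expect it to go through, but it is where the argument must be done honestly rather than by appeal to the rotational heuristic.
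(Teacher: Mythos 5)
Your plan is correct, and it is essentially the computation the paper has in mind: the paper omits the proof of Lemma \ref{prop:oldwn_linear_conj} precisely because it is the kind of straightforward endpoint sign-and-valuation bookkeeping you describe, and your verification that $\ind(iF\,|\,\partial\Gamma)$ reproduces the values $1,\ \frac12,\ \frac14,\ 0$ does check out (e.g.\ for $\Gamma=[0,1]^2$, $z_0=0$, one gets $\ind(iZ\,|\,\partial\Gamma)=\frac14$ from the single contribution $\Ind_1^0(-1,T)=\frac12$ on the top edge). Two remarks on where you deviate from the intended argument. First, your handling of $1/F$ via Proposition \ref{Wisalogfinal} ($\mind(1/F)+\mind(F)=\mind(1)=0$) is legitimate and non-circular, since that proposition is proved before and independently of this lemma; the paper instead treats the conjugate $\overline F$ by the same direct computation, but nothing is lost either way. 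Second, you can shorten the $\ind(iF)$ computation you flag as the main obstacle: when $z_0$ is not a vertex of $\Gamma$, $F=Z-z_0$ has valuation $0$ (hence even) at all four vertices, so Lemma \ref{multiplybyconstantforw} already gives $\ind(iF\,|\,\partial\Gamma)=\ind(F\,|\,\partial\Gamma)$, and only the genuinely delicate vertex case needs the explicit four-side Cauchy-index evaluation; this also replaces your rotation heuristic for the interior and exterior cases with a rigorous algebraic statement. Your discarded reflection argument for $\ind(\overline F)=-\ind(F)$ would need care on the boundary cases, but since you opt for the Proposition \ref{Wisalogfinal} route this does not affect the proof.
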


We omit the proof of Lemma \ref{prop:oldwn_linear_conj} since it follows
from a straightforward computation.

\begin{proof}{Proof of Theorem \ref{thm:new_wn}:} 
By Lemma \ref{prop:oldwn_linear_conj}, $\mind$ does the right counting for non-zero constants, linear monic polynomials and their inverses, and  
by Proposition \ref{Wisalogfinal}, $\mind$ is additive with respect to multiplication in $\C(Z) \setminus \{0\}$. This proves the theorem. 
\end{proof}

\begin{proof}{Proof of Theorem \ref{thm:multiplicativityinevencase}:} 
Theorem \ref{thm:multiplicativityinevencase} is a corollary of Theorem  \ref{thm:new_wn},  since under the assumption that $F/G$ has even valuation at the vertices of $\Gamma$, using
 Lemma \ref{rem:val_vertex_even}, we have  $\mind(F / G\, | \, \partial \Gamma)=\ind(F / G\, | \, \partial \Gamma)$.
\end{proof}

An alternative proof for Theorem \ref{thm:multiplicativityinevencase} follows
from  
first proving that $\ind$ does the right counting for non-zero constants, 
monic linear polynomials (\cite[Proposition 4.4]{Eis}), and its inverses; then proving that 
if 
$F(Z) = (Z - z_0)^2$ with 
$z_0$ a vertex of $\Gamma$, 
$\ind$ does the right counting for $F$ and $1/F$, and finally, using the additivity property for
$\ind$ proven in Proposition \ref{lemwisalogeven}. This proof avoids completely
the definition of $\mind$.

\section*{Annex: Proof from Section 2}

In this annex we prove Propositions \ref{lem:ab_not_bad} and \ref{lem:form_adapt_2} from 
Section 2. 

We start recalling the inversion formula (see \cite[Theorem 3.9]{Eis}), which
relates the Cauchy index with the sign variation on an interval. 

\begin{theorem}\label{thm:invf}
Let $P, Q \in \R[X]$ and $a, b \in \R$. Then 
$$
\Ind_a^b(P, Q) + \Ind_a^b(Q, P) = {\rm Var}_a^b(P, Q).  
$$
\end{theorem}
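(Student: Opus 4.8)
The plan is to reduce the identity to a purely local statement at each point of $\R$, since both sides of the inversion formula are built up additively from local contributions. First I would observe that, by Definition~\ref{def:cauchyindex} and Definition~\ref{defn:CI_at_a_point}, for $a < b$ the left-hand side $\Ind_a^b(P,Q) + \Ind_a^b(Q,P)$ decomposes as a boundary term at $a$ (coming from the $\varepsilon=+$ indices), a boundary term at $b$ (coming from the $\varepsilon=-$ indices), and an interior sum $\sum_{x\in(a,b)}\bigl(\Ind_x(P,Q)+\Ind_x(Q,P)\bigr)$. Meanwhile the right-hand side ${\rm Var}_a^b(P,Q) = -\tfrac12\Sign(P,Q,a)+\tfrac12\Sign(P,Q,b)$ is also a difference of a term depending only on $a$ and a term depending only on $b$. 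So it suffices to prove two local identities: for every $x\in\R$,
$$
\Ind_x^+(P,Q) + \Ind_x^+(Q,P) = \tfrac12 - \tfrac12\Sign(P,Q,x), \qquad
\Ind_x^-(P,Q) + \Ind_x^-(Q,P) = \tfrac12 + \tfrac12\Sign(P,Q,x),
$$
and separately that the interior-point contribution vanishes, i.e. $\Ind_x(P,Q) + \Ind_x(Q,P) = 0$ for all $x$ (which is immediate once the two identities above are subtracted). Actually the second identity follows from the first by the substitution $x \mapsto$ a point just to the left, so the crux is really a single local computation.

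The main case analysis is on $v := \val_x(P/Q)\in\Z\cup\{+\infty\}$, assuming $P,Q$ not both zero; the case where one of $P,Q$ is the zero polynomial, and the degenerate cases, should be checked separately but are trivial since all the relevant quantities are $0$. Write $s := \sign(P_x(x)Q_x(x))\in\{+1,-1\}$ when both $P,Q\ne 0$. If $v>0$, then $\val_x(Q/P) = -v < 0$, so $\Ind_x^+(P,Q)=0$ while $\Ind_x^+(Q,P) = \tfrac12 s$; on the other side $\Sign(P,Q,x)=0$ since $\val_x(P/Q)\ne 0$, so we need $\tfrac12 s = \tfrac12$, i.e. $s = 1$. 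This is exactly the point to check: when $v>0$, $P_x(x)Q_x(x)$ and $Q_x(x)^2$ differ... more precisely $P/Q$ vanishes at $x$ with even or odd multiplicity, but $\Ind_x^+(Q,P)$ uses $\sign(Q_x(x)P_x(x)) = \sign(P_x(x)Q_x(x)) = s$, and I claim $s=1$ here — wait, that is false in general. Let me restate: the correct bookkeeping is that $\Ind_x^+(Q,P)$ contributes $\tfrac12\sign(Q_x(x)P_x(x))$ which is $\tfrac12 s$, and $\Sign(P,Q,x)=\Sign(Q,P,x)$ whenever $v\ne 0$ is $0$; so the identity $\tfrac12 s + 0 = \tfrac12 - 0$ forces $s=1$, which need not hold, so this decomposition must be wrong and in fact $v>0$ and $v<0$ must be handled together with the symmetric roles of $P,Q$. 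The right move is: when $v>0$ we have $\val_x(Q/P)=-v<0$, so $\Ind_x^+(Q,P)\ne 0$; but also then $P(x)$ may or may not be $0$ — if $\mult_x(P)>0$ then $P(x)=0$, and $\Sign(P,Q,x)=0$, and indeed one computes $\Ind_x^+(Q,P)+\Ind_x^-(Q,P)$ or the combination carefully. I would organize the cases as $v=0$, $v>0$, $v<0$, and in each case evaluate all four quantities $\Ind_x^\pm(P,Q),\Ind_x^\pm(Q,P)$ directly from the definition, using that $(P/Q)_x = P_x/Q_x$ and $\sign$ is multiplicative, then verify the two displayed identities line by line.

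The hardest part will be getting the sign bookkeeping exactly right in the case $v<0$ (equivalently $v>0$ with $P,Q$ swapped), where the factor $(-1)^{v}$ in the $\varepsilon=-$ branch interacts with the parity of $v$ and with whether $P(x)$ or $Q(x)$ vanishes; the key relation to exploit is $(-1)^{\val_x(P/Q)} = (-1)^{\mult_x(P)}(-1)^{\mult_x(Q)}$ and that $\Sign(P,Q,x)=0$ precisely when $v\ne 0$, so that on the "variation" side only the cases $v=0$ contribute a nonzero $\Sign$. I expect that once the boundary identities at a single point are established, summing over $(a,b)$ and collecting the $a$- and $b$-endpoint terms reproduces ${\rm Var}_a^b(P,Q)$ mechanically, and the $b<a$ and $a=b$ cases follow by the antisymmetry built into Definition~\ref{def:cauchyindex} together with the antisymmetry of ${\rm Var}_a^b$ in $a,b$. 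I would also remark that the cited reference \cite[Theorem 3.9]{Eis} can be invoked directly, but giving the self-contained local argument is cleaner and fixes conventions for the rest of the paper.
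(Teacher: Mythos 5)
Your reduction of the inversion formula to single-point identities does not work, and the two displayed identities you propose to ``verify line by line'' are simply false. For the first, take $P=1$, $Q=-X$, $x=0$: then $\Ind_0^+(P,Q)+\Ind_0^+(Q,P)=-\tfrac12$, while $\tfrac12-\tfrac12\Sign(P,Q,0)=\tfrac12$. For the second, take $P=Q=1$: the left side is $0$ while the right side is $1$. The claim that the interior contribution vanishes is also false: for $P=1$, $Q=X$ one has $\Ind_0(P,Q)+\Ind_0(Q,P)=1\ne 0$, and on $[-1,1]$ both sides of the theorem equal $1$, with the entire left-hand contribution coming from the interior point $0$ and the entire right-hand side from the endpoints — so no picture in which each point contributes equally to both sides can be correct. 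You noticed the inconsistency midway (``$s=1$ \dots that is false in general''), but the proposed repair, redoing the case analysis on $\val_x(P/Q)$ and then verifying the same two identities, cannot succeed: the statement genuinely cannot be localized to points, because the Cauchy index picks up $\pm 1$ at interior poles of odd order while ${\rm Var}_a^b$ sees only $a$ and $b$.

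The missing idea is that the constant $\tfrac12$ in your boundary identities must be replaced by half the \emph{one-sided sign of $PQ$ adjacent to $x$}, and that these one-sided terms only cancel after subdividing at the roots of $PQ$. Concretely, for $P,Q\ne 0$ and $s:=\sign\!\left(P_x(x)Q_x(x)\right)$ one checks, by the three cases $\val_x(P/Q)<0$, $=0$, $>0$, that
$$
\Ind_x^+(P,Q)+\Ind_x^+(Q,P)=\tfrac12\,s-\tfrac12\Sign(P,Q,x),\qquad
\Ind_x^-(P,Q)+\Ind_x^-(Q,P)=\tfrac12\,(-1)^{\mult_x(P)+\mult_x(Q)}s-\tfrac12\Sign(P,Q,x),
$$
where $s$ and $(-1)^{\mult_x(P)+\mult_x(Q)}s$ are precisely the signs of $PQ$ immediately to the right and to the left of $x$. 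One then subdivides $[a,b]$ at the roots of $PQ$, notes that both sides of the theorem are additive under concatenation of intervals, that there is no interior contribution on a subinterval whose interior is free of roots of $PQ$, and that $\sign(PQ)$ is constant on such an open subinterval, so the one-sided sign terms cancel between consecutive subdivision points and only $-\tfrac12\Sign(P,Q,a)+\tfrac12\Sign(P,Q,b)$ survives; the cases $P=0$, $Q=0$ and $a\ge b$ are trivial, as you say. Note finally that the paper itself gives no proof of this theorem — it is quoted from \cite[Theorem 3.9]{Eis} — so there is no internal argument to compare with; but as written, your attempt has a genuine gap at its central step.
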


Next, we prove two auxiliary lemmas. 
The proof of these lemmas appears already in \cite{Eis} as part of 
the proof of \cite[Theorem 4.5]{Eis} (also as part of the proof of \cite[Lemma 20]{PerRoy}), but we also include these proofs here for completeness.

\begin{lemma}\label{lem:aux_pf_p=0} 
Let $P, Q, R, S \in \R[X]$ 
and $a, b \in \R$ with $a < b$.
If $P = 0$ or $Q = 0$ but $P$ and $Q$ are not 
simultaneously $0$,
the auxiliary product formula (\ref{eqn_auxiliar}) holds. 
Similarly, if $R = 0$ or $S=0$ but 
$R$ and $S$ are not simultaneously $0$ the auxiliary product formula (\ref{eqn_auxiliar}) holds.
\end{lemma}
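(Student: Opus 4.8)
The plan is to treat the two symmetric situations separately; by the evident symmetry of the auxiliary product formula under the swap $(P,Q)\leftrightarrow(R,S)$ (which sends $PR-QS$ to itself, $PS+QR$ to itself, and $QS$ to itself), it suffices to handle the case where $P=0$ or $Q=0$ but not both, and then the other case follows verbatim.

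First I would dispose of the case $Q=0$, $P\neq 0$. Then $PR-QS=PR$, $PS+QR=PS$, and $QS=0$. The identity to prove reduces to
$$
\Ind_a^b(PR,PS)=\Ind_a^b(P,0)+\Ind_a^b(R,S)-\Var_a^b(PS,0).
$$
Here $\Ind_a^b(P,0)=0$ and $\Var_a^b(PS,0)=0$ directly from the definitions (the ``otherwise'' branch applies since one of the two polynomials is $0$, so $\Sign$ vanishes identically, hence $\Var$ is constantly $\frac12$ and $\Var_a^b=0$; likewise $\Ind_x^\varepsilon(P,0)=0$ for all $x$). So the claim collapses to $\Ind_a^b(PR,PS)=\Ind_a^b(R,S)$. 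This is just the stability of the Cauchy index under cancelling a common nonzero polynomial factor, namely $P$: indeed $PR$ and $PS$ are not simultaneously $0$ (since $R,S$ are not), so by the remark in the text preceding Theorem \ref{thm:invf} the common factor $P$ can be simplified without changing $\Ind_a^b$. I would cite that remark, or unwind it at the level of $\Ind_x$ and $\Ind_x^\varepsilon$ at each point, noting $\mathrm{mult}_x(PR)-\mathrm{mult}_x(PS)=\mathrm{mult}_x(R)-\mathrm{mult}_x(S)$ and $\mathrm{sign}((PR)_x(x)(PS)_x(x))=\mathrm{sign}(P_x(x)^2 R_x(x)S_x(x))=\mathrm{sign}(R_x(x)S_x(x))$.

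Next the case $P=0$, $Q\neq 0$. Then $PR-QS=-QS$, $PS+QR=QR$, and the formula to prove is
$$
\Ind_a^b(-QS,QR)=\Ind_a^b(0,Q)+\Ind_a^b(R,S)-\Var_a^b(QR,QS).
$$
Again $\Ind_a^b(0,Q)=0$. For the last term, $\Var_a^b(QR,QS)=\Var_a^b(R,S)$, since cancelling the common nonzero factor $Q$ does not change $\Sign$ (the text states this explicitly; $R,S$ not simultaneously zero). So I must show $\Ind_a^b(-QS,QR)=\Ind_a^b(R,S)-\Var_a^b(R,S)$. Cancel $Q$ from $-QS$ and $QR$: this changes $\Ind_a^b(-QS,QR)$ into $\Ind_a^b(-S,R)$ (the sign change from $Q$ cancelling is squared away exactly as above, and $-S,R$ are not simultaneously zero). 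Also $\Ind_a^b(-S,R)=\Ind_a^b(S,R)$ because replacing a polynomial by its negative flips the sign of $P_x(x)Q_x(x)$ and hence the sign of every $\Ind_x^\varepsilon$, but the two half-turn contributions $\Ind_x^+$ and $\Ind_x^-$ both flip, and their difference... more carefully, $\Ind_x^\pm(-S,R)=-\Ind_x^\pm(S,R)$ is false — rather, negating one argument negates $\mathrm{sign}((\cdot)_x(x)(\cdot)_x(x))$, so $\Ind_x(-S,R)=-\Ind_x(S,R)$. Thus I would instead keep track of signs and reduce to the inversion formula: by Theorem \ref{thm:invf}, $\Ind_a^b(S,R)+\Ind_a^b(R,S)=\Var_a^b(S,R)=\Var_a^b(R,S)$ (the latter equality because $\Sign$ is symmetric in its first two arguments). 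Hence $\Ind_a^b(R,S)-\Var_a^b(R,S)=-\Ind_a^b(S,R)=\Ind_a^b(-S,R)$, which is exactly what remained to be shown.

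The only genuinely delicate point is bookkeeping of the ``not simultaneously zero'' hypotheses so that every cancellation of a common factor ($P$, or $Q$) is legitimate and every appeal to the inversion formula is to polynomials that are not both zero; the inputs guarantee this in each branch. No step here requires the notion of bad number — that enters only in the subsequent Proposition \ref{lem:ab_not_bad}. I expect the write-up to be short, the main care being to invoke either the text's remark on common-factor stability or Theorem \ref{thm:invf} at the right moment rather than re-deriving them.
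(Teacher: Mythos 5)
Your proposal is correct and follows essentially the same route as the paper: in the case $P=0$, $Q\neq 0$ you cancel the common factor $Q$ and invoke the inversion formula (Theorem \ref{thm:invf}) for $(R,S)$, exactly as the paper does, and in the case $Q=0$, $P\neq 0$ you reduce to cancelling the common factor $P$, again as in the paper. The only cosmetic difference is that you make explicit the negation identity $\Ind_a^b(-S,R)=-\Ind_a^b(S,R)$ and the $(P,Q)\leftrightarrow(R,S)$ symmetry for the second half, which the paper leaves implicit.
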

\begin{proof}{Proof:}
If $P = 0$ and $Q \ne 0$, using the inversion formula (Theorem (\ref{thm:invf})) with $R$ and $S$ we have 
$$
\begin{array}{ccl}
\Ind_a^b(PR - QS, PS + QR) & = & \Ind_a^b(- S,  R) \quad \\[2mm]
& = & \Ind_a^b(R,  S) - \Var_a^b(R, S) \\[2mm]
& = & \Ind_a^b(P, Q) + \Ind_a^b(R, S) -\Var_a^b(PS+QR,QS). 
\end{array}
$$
On the other hand, if $P \ne 0$ and $Q = 0$, 
$$
\Ind_a^b(PR - QS, PS + QR) = \Ind_a^b(R,  S) = 
 \Ind_a^b(P, Q) + \Ind_a^b(R, S)  -\Var_a^b(PS+QR,QS).
$$
\end{proof}

\begin{lemma}\label{lem:aux_pf_pq+rs=0} 
Let $P, Q, R, S \in \R[X]$ 
and $a, b \in \R$ with $a < b$.
If $Q, S \ne 0$ and $PS + QR = 0$, the auxiliary product formula (\ref{eqn_auxiliar}) holds.
\begin{proof}{Proof:}
Since $Q \ne 0$, $S \ne 0$ and $PS + QR = 0$, we have
$P/Q = -R/S \in \R(X)$
and then
$$
\Ind_a^b(PR - QS, PS + QR) = 0 = 
\Ind_a^b(P, Q) + \Ind_a^b(R, S)  -\Var_a^b(PS+QR,QS).
$$
\end{proof}
 \end{lemma}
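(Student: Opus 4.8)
The plan is to show that, under the hypotheses $Q, S \ne 0$ and $PS + QR = 0$, \emph{both} sides of the auxiliary product formula (\ref{eqn_auxiliar}) are equal to $0$.

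First I would dispose of the left-hand side. Since $PS + QR = 0$, the second entry of the Cauchy index on the left is the zero polynomial, so by Definition \ref{defn:CI_at_a_point} every local contribution $\Ind_x^{\varepsilon}(PR - QS, 0)$ falls in the ``otherwise'' branch and vanishes; hence, by Definition \ref{def:cauchyindex}, $\Ind_a^b(PR - QS, PS + QR) = \Ind_a^b(PR - QS, 0) = 0$. (One can note in passing that $PR - QS$ must actually be nonzero here, since $PR - QS = PS + QR = 0$ together with $S \ne 0$ would force $(P^2 + Q^2)S = 0$ and hence $P = Q = 0$, contradicting $Q \ne 0$; but this observation is not needed.)

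Next I would treat the right-hand side, beginning with the sign-variation term. Because $PS + QR = 0$, we have $\Var_a^b(PS + QR, QS) = \Var_a^b(0, QS)$, and by the definition of $\Sign$ one gets $\Sign(0, QS, x) = 0$ for every $x$, so $\Var_x(0, QS) = \tfrac12$ at every point; the two endpoint contributions cancel and $\Var_a^b(PS + QR, QS) = 0$. It then remains to prove $\Ind_a^b(P, Q) + \Ind_a^b(R, S) = 0$, for which I would use $PS = -QR$. If $P = 0$, then $QR = 0$ and $Q \ne 0$ force $R = 0$, so both Cauchy indices vanish because their first entry is $0$; the case $R = 0$ is symmetric. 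Otherwise $P, R \ne 0$ and $PS = -QR$ yields the equality of rational functions $P/Q = -R/S$ in $\R(X)$. Invoking the invariance of the Cauchy index under simplification of common factors recorded just after Definition \ref{def:cauchyindex} (which makes $\Ind_a^b(\cdot,\cdot)$ depend only on the underlying rational function when numerator and denominator are both nonzero), together with the identity $\Ind_a^b(-R, S) = -\Ind_a^b(R, S)$ that follows at once from Definition \ref{defn:CI_at_a_point}, we obtain $\Ind_a^b(P, Q) = \Ind_a^b(-R, S) = -\Ind_a^b(R, S)$, so the sum is $0$. Combining, the right-hand side of (\ref{eqn_auxiliar}) equals $0$, which is the left-hand side.

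I do not expect a genuine obstacle in this lemma; the only points requiring care are bookkeeping inside the pair-of-polynomials formalism — checking the degenerate subcases $P = 0$ and $R = 0$ separately, and being explicit that the Cauchy index is unchanged when a pair is replaced by another pair defining the same rational function, which is exactly what licenses the passage from $P/Q = -R/S$ to $\Ind_a^b(P, Q) = \Ind_a^b(-R, S)$.
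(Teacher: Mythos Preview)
Your proof is correct and follows essentially the same approach as the paper: both sides of (\ref{eqn_auxiliar}) are shown to vanish via $P/Q = -R/S$. The paper compresses this into a single line, while you spell out the three ingredients (the second entry being $0$ kills the left side, the first entry being $0$ kills the $\Var$ term, and the rational-function identity forces the two Cauchy indices to cancel) and handle the degenerate case $P=R=0$ explicitly.
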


We are ready for the proof of Proposition \ref{lem:ab_not_bad}. 
Again, part of this proof appears already in the proof of \cite[Theorem 4.5]{Eis} (also in the proof of \cite[Lemma 20]{PerRoy}). The new part is essentially Case 3, where bad numbers in the interior of 
the interval are considered.

\begin{proof}{Proof of Proposition \ref{lem:ab_not_bad} :} If at least
one of the polynomials $P, Q, R, S$ or $PQ + RS$ is $0$, 
identity (\ref{eqn_auxiliar}) holds
by Lemmas \ref{lem:aux_pf_p=0} and \ref{lem:aux_pf_pq+rs=0}.
So in the rest of the proof we assume that none of these polynomials is $0$. 
We divide 
$P$ and $Q$ by $\gcd(P, Q)$ and $R$ and $S$ by 
$\gcd(R, S)$, so without loss of generality
we also assume that $P$ and $Q$ are coprime and $R$ and $S$ are coprime.

We divide the interval $[a, b]$ in finitely many subintervals $[a', b']$ and it is enough to prove that identity (\ref{eqn_auxiliar}) holds in each of these subintervals. 
We consider all the roots
of $P, Q, R, S$ or $PS + QR$ in $[a, b]$ (possibly none), this includes all bad numbers in $[a, b]$ (again, possibly none).  
We divide $[a, b]$ in as many subintervals as needed in such a way that each
subinterval contains at most one of these roots and additionally: 
\begin{itemize}
 \item if the root is not a bad number, then it is an endpoint of the subinterval,  
 \item if the root is a bad number, then it is an interior point of the subinterval. 
 \end{itemize}
This is possible because $a$ and $b$ are not bad numbers. 
We consider then several cases as follows.

\begin{itemize}

\item[\underline{Case 1:}] There is no root of $Q, S$ or $PS + QR$ in $[a', b']$,  
then 
$$
\Ind_{a'}^{b'}\left(PR-QS, PS+QR\right) = 
\Ind_{a'}^{b'}\left(P,Q\right) = 
\Ind_{a'}^{b'}\left(R,S\right) = 0
$$
and
$$
\Sign(PS+QR, QS, a') 
 =  
\Sign(PS+QR, QS, b')  
$$
so identity (\ref{eqn_auxiliar}) holds in $[a', b']$.

\item[\underline{Case 2:}] There is one root of $Q$, $S$ or $PS + QR$ in $[a', b']$ which is not a
bad number, and therefore is an endpoint of $[a', b']$. In this case, by composing with the linear change 
$X \mapsto a ' + b' - X$ (which interchanges $a'$ and $b'$) we can always suppose that the root is $a'$. We split this case in many cases:

\begin{enumerate}
 \item[\underline{Case 2a:}] If $Q(a') \ne 0, S(a') \ne 0$ and $(PS + QR)(a') = 0$, then
$$
\Ind_{a'}^{b'}\left(P,Q\right) = 
\Ind_{a'}^{b'}\left(R,S\right) = 
\Sign(PS+QR, QS, a') =  
0.
$$
On the other hand
$$
\frac{P(a')}{Q(a')} =  - \frac{R(a')}{S(a')},  
$$
so 
$$
(PR - QS)(a') = 
Q(a')S(a')\underbrace{\left(\frac{P(a')}{Q(a')}\frac{R(a')}{S(a')} - 1  \right)}_{<0} \ne 0.
$$
Write $PS + QR = (X- a')^{\mu}T$ with $\mu = \mult_{a'}(PS + QR) > 0$. 
Note that $\sign(T(a')) = \sign(T(b')) = \sign((PS+QR)(b'))$. 
Then
$$
\Ind_{a'}^{b'}\left(PR-QS, PS+QR\right) = -\frac12 \sign \big(Q(a')S(a')T(a')\big) = -\frac12 
\Sign(PS+QR, QS, b')
$$
so identity (\ref{eqn_auxiliar}) holds in $[a', b']$.

\item[\underline{Case 2b:}] If $Q(a') = 0$ and $S(a') \ne 0$, since 
$P$ and $Q$ have no common roots, then $(PS + QR)(a') \ne 0$
and we have that 
$$
\Ind_{a'}^{b'}\left(PR-QS, PS+QR\right) = 
\Ind_{a'}^{b'}\left(R,S\right) = 
\Sign(PS+QR, QS, a')  =  
0.
$$
Write $Q = (X- a')^{\mu}Q_{a'}$ with $\mu = \mult_{a'}(Q) >0$.
Note that $\sign(Q_{a'}(a')) = \sign(Q(b'))$.  
Then
$$
\Ind_{a'}^{b'}\left(P,Q\right) 
= \frac12 \sign\big(P(a') Q_{a'}(a')\big) 
= \frac12 \sign \Big(\big((PS+QR) Q_{a'}S\big)(a')  \Big)
= \frac12 \Sign(PS+QR, QS, b')
$$
so identity (\ref{eqn_auxiliar}) holds in $[a', b']$.

\item[\underline{Case 2c:}] If $Q(a') \ne 0$ and $S(a') = 0$ we proceed in a similar way to the previous case. 

\item[\underline{Case 2d:}] If $Q(a') = 0$ and $S(a') = 0$, then $(PS + QR)(a') = 0$, and since
$P$ and $Q$ have no common roots and 
$R$ and $S$ have no common roots, 
$P(a') \ne 0$, $R(a') \ne 0$.

Write $PS + QR = (X- a')^{\mu_0}T$ with $\mu_0 = \mult_{a'}(PS + QR) > 0$, 
$Q = (X- a')^{\mu_1}Q_{a'}$ with $\mu_1
= \mult_{a'}(Q) > 0$ and
$S = (X- a')^{\mu_2}S_{a'}$ with $\mu_2 = \mult_{a'}(S) > 0$.
Note that $\val_{a'}(P/Q) = -\mu_1$, 
$\val_{a'}(R/S) = -\mu_2$ and 
$\val_{a'}((PS +QR)/QS) = \mu_0 - \mu_1 - \mu_2$.
We denote
$$
\begin{array}{rcl}
\sigma_1 & := & \sign( P(a') ) \in \{-1, 1\}, \\[2mm]
\sigma_2 & := & \sign( R(a') ) \in \{-1, 1\}, \\[2mm]
\sigma_3 & := & \sign(T(a') ) \in \{-1, 1\}, \\[2mm]
\sigma_4 & := & \sign( Q_{a'}(a') )\in \{-1, 1\}, \\[2mm]
\sigma_5 & := & \sign( S_{a'}(a') )\in \{-1, 1\}. \\[2mm]
\end{array}
$$

Since $a'$ is not a bad number, either $\mu_1 \ne \mu_2$ or $\mu_1 = \mu_2$ but $\mu_0
\ne  \mu_1 + \mu_2$. Note that if $\mu_1 \ne \mu_2$, then again $\mu_0 = \min\{ \mu_1, \mu_2\} \ne \mu_1 + \mu_2$. 
So, in any case 
we have
$$
\begin{array}{rcl}
\Sign(PS+QR, QS, a') & = & 0. \\[2mm]
\end{array}
$$
On the other hand, we have 
$$
\begin{array}{rcl}
\Ind_{a'}^{b'}(PR - QS, PS + QR) & = & \frac12 \sigma_1\sigma_2\sigma_3,\\[2mm]
\Ind_{a'}^{b'}(P, Q) & = & \frac12 \sigma_1\sigma_4, \\[2mm]
\Ind_{a'}^{b'}(R, S) & = & \frac12 \sigma_2\sigma_5, \\[2mm]
\frac12\Sign(PS+QR, QS, b') & = & \frac12\sigma_3\sigma_4\sigma_5. \\[2mm]
\end{array}
$$
We need to prove that 
$$
\sigma_1\sigma_2\sigma_3 = \sigma_1\sigma_4 + \sigma_2\sigma_5 - \sigma_3\sigma_4\sigma_5
$$
or, equivalently, 
\begin{equation}\label{eq:aux_prod_formula}
\big(\sigma_1\sigma_2 + \sigma_4\sigma_5\big)\sigma_3 = \sigma_1\sigma_4 + \sigma_2\sigma_5. 
\end{equation}
We take into account that 
$\sigma_1 = \sign(P(b'))$, 
$\sigma_2 = \sign(R(b'))$, 
$\sigma_3 = \sign((PS+QR)(b'))$, 
$\sigma_4 = \sign(Q(b'))$ and 
$\sigma_5 = \sign(S(b'))$ and one final time we split in cases as follows. 
\begin{itemize}
\item If $\sigma_1 = \sigma_5$ and $\sigma_2 = \sigma_4$, then $\sigma_3 = 1$ and 
equation (\ref{eq:aux_prod_formula})
holds. 
\item If $\sigma_1 = -\sigma_5$ and $\sigma_2 = -\sigma_4$ then $\sigma_3 = -1$ and
equation (\ref{eq:aux_prod_formula})
holds. 
\item In every other case, exactly three elements in the set $\{\sigma_1, \sigma_2, \sigma_4, \sigma_5\}$ are 
equal and the remaining one is different. Then 
$$
\sigma_1\sigma_2 + \sigma_4\sigma_5 = \sigma_1\sigma_4 + \sigma_2\sigma_5 = 0
$$
and
equation (\ref{eq:aux_prod_formula})
holds.
\end{itemize}
So, identity (\ref{eqn_auxiliar}) holds in $[a', b']$. 
\end{enumerate}

\item[\underline{Case 3:}] There is one root $c$ of $Q$, $S$ or $PS + QR$ in $[a', b']$ which is a 
bad number, and therefore $c \ne a'$ and $c \ne b'$. 
Since $c$ is a bad number, $c$ is indeed a root of $Q$, $S$ and $PS + QR$. 
Also, since
$P$ and $Q$ have no common root and 
$R$ and $S$ have no common root, 
$P(c) \ne 0$, $R(c) \ne 0$.

Write $PS + QR = (X- c)^{2\mu}T$,  
$Q = (X- c)^{\mu} Q_c$,  
$S = (X- c)^{\mu} S_c$
with $\mu = \mult_c(Q) = \mult_c(S) = -\val_c(P/Q) = -\val_c(R/S) > 0$.
We denote
$$
\begin{array}{rcl}
\sigma_1 & := & \sign( P(c) ) \in \{-1, 1\}, \\[2mm]
\sigma_2 & := & \sign( R(c) ) \in \{-1, 1\}, \\[2mm]
\sigma_3 & := & \sign(T(c) ) \in \{-1, 1\}, \\[2mm]
\sigma_4 & := & \sign(Q_c(c) )\in \{-1, 1\}, \\[2mm]
\sigma_5 & := & \sign(S_c(c) )\in \{-1, 1\}. \\[2mm]
\end{array}
$$
Since $\val_c( (PR-QS)/(PS + QR) ) = -2\mu$ is even, we 
have 
$$
\Ind_{a'}^{b'}(PR - QS, PS + QR) =  0.
$$
On the other hand, we have
$$
\begin{array}{rcl}
\Ind_{a'}^{b'}(P, Q) & = &  \frac12(1 - (-1)^{\mu})\sigma_1\sigma_4, \\[2mm]
\Ind_{a'}^{b'}(R, S) & = &  \frac12(1 - (-1)^{\mu})\sigma_2\sigma_5, \\[2mm]
\frac12\Sign(PS+QR, QS, a') & = & \frac12\sigma_3\sigma_4\sigma_5, \\[2mm]
\frac12\Sign(PS+QR, QS, b') & = & \frac12\sigma_3\sigma_4\sigma_5. \\[2mm]
\end{array}
$$
We need to prove that  $\sigma_1\sigma_4 + \sigma_2\sigma_5 = 0$. 
Since $\mu >0$ and 
$$
(X- c)^{2\mu}T = PS + QR = (X- c)^{\mu}(P S_c +  Q_c R),  
$$
we conclude that 
$$
P(c) S_c(c) + Q_c(c) R(c) = 0
$$
and therefore
$$
\sigma_1\sigma_5 = 
\sign(P(c)S_c(c)) = - \sign( Q_c(c) R(c)) = 
- \sigma_2\sigma_4,
$$
but then
$$
\sigma_1\sigma_4 = \sigma_1\sigma_5^2\sigma_4 = -\sigma_2\sigma_4^2\sigma_5 = -\sigma_2\sigma_5,
$$
and identity (\ref{eqn_auxiliar}) holds in $[a', b']$. 
\end{itemize}
\end{proof}

We conclude this annex with the proof of Proposition \ref{lem:form_adapt_2}.

\begin{proof}{Proof of Proposition \ref{lem:form_adapt_2}:}
We start with item i). Since there are bad numbers, we have $P, Q, R, S, PS + QR \ne 0$. As in the proof of 
Proposition
\ref{lem:ab_not_bad}, 
we divide 
$P$ and $Q$ by $\gcd(P, Q)$ and $R$ and $S$ by 
$\gcd(R, S)$,
so  
without loss of generality, we assume that $P$ and $Q$ are coprime and $R$ and $S$ are coprime.

Now, we consider $a' \in (a, b)$ such that there is no root of $P, Q, R, S$ or 
$PS + QR$ in $(a, a']$. Then $a'$ is not a bad number and therefore by Lemma \ref{lem:ab_not_bad}, identity 
(\ref{eqn_auxiliar}) holds in $[a', b]$. It is enough then to prove that
$$
\Ind_a^{a'}(PR - QS, PS + QR) \ 
=  \ \Ind_a^{a'}(P, Q) + \Ind_a^{a'}(R, S)  - \frac12 \Sign(PS + QR, QS, a'). 
$$

Since $Q(a) = 0$ and $S(a) = 0$, then $(PS + QR)(a) = 0$, and since
$P$ and $Q$ have no common roots and 
$R$ and $S$ have no common roots, 
$P(a) \ne 0$, $R(a) \ne 0$.

Write $PS + QR = (X- a)^{2\mu}T$, 
$Q = (X- a)^{\mu} Q_a$
$S = (X- a)^{\mu} S_a$
with $\mu = \mult_a(Q) = \mult_a(S) = -\val_a(P/Q) = -\val_a(R/S) > 0$.
We denote
$$
\begin{array}{rcl}
\sigma_1 & := & \sign( P(a) ) \in \{-1, 1\}, \\[2mm]
\sigma_2 & := & \sign( R(a) ) \in \{-1, 1\}, \\[2mm]
\sigma_3 & := & \sign(T(a) ) \in \{-1, 1\}, \\[2mm]
\sigma_4 & := & \sign(  Q_a(a) )\in \{-1, 1\}, \\[2mm]
\sigma_5 & := & \sign(  S_a(a) )\in \{-1, 1\}. \\[2mm]
\end{array}
$$
We have 
$$
\begin{array}{rcl}
\Ind_{a}^{a'}(PR - QS, PS + QR) & = & \frac12 \sigma_1\sigma_2\sigma_3,\\[2mm]
\Ind_{a}^{a'}(P, Q) & = & \frac12 \sigma_1\sigma_4, \\[2mm]
\Ind_{a}^{a'}(R, S) & = & \frac12 \sigma_2\sigma_5, \\[2mm]
\frac12\Sign(PS+QR, QS, a') & = & \frac12\sigma_3\sigma_4\sigma_5. \\[2mm]
\end{array}
$$
So  we need to prove that 
$$
\sigma_1\sigma_2\sigma_3 = \sigma_1\sigma_4 + \sigma_2\sigma_5 - \sigma_3\sigma_4\sigma_5.
$$
The rest of the proof is exactly as in Case 2d of the proof of Proposition \ref{lem:ab_not_bad}.

The proof of item ii) is similar to the proof of item i). The proof of item iii) follows easily by introducing an intermediate point between $a$ and $b$ which is not a 
bad number and applying items i) and ii) to the new two subintervals. 
\end{proof}

\textbf{Acknowledgments:} We are very grateful to the reviewers for their helpful comments and suggestions.

\end{document}